\documentclass[12pt]{article}
\usepackage{standalone}
\usepackage{amsmath, amsthm, amssymb, color}
\usepackage[colorlinks=true,linkcolor=blue,urlcolor=blue]{hyperref}
\usepackage{graphicx}
\usepackage{caption}
\usepackage{mathtools}
\usepackage{enumerate}
\usepackage{verbatim}
\usepackage{tikz,tikz-cd,tikz-3dplot}
\usepackage{amssymb}
\usetikzlibrary{matrix}
\usetikzlibrary{arrows}
\usepackage{algorithm}
\usepackage[noend]{algpseudocode}
\usepackage{caption}
\usepackage[normalem]{ulem}
\usepackage{subcaption}
\usepackage{multicol}
\oddsidemargin \evensidemargin
\usepackage{makecell}
\usepackage{array}
\usepackage{enumitem}
\usepackage{booktabs}
\usepackage{amsmath}

\newtheorem{theorem}{Theorem}[section]

\newtheorem{proposition}[theorem]{Proposition}
\newtheorem{lemma}[theorem]{Lemma}

\theoremstyle{definition}
\newtheorem{definition}[theorem]{Definition}

\newtheorem{question}[theorem]{Question}

\newtheorem{remark}[theorem]{Remark}
\newtheorem{notation}[theorem]{Notation}

\newtheorem{examplex}[theorem]{Example}
\newenvironment{example}
{\pushQED{\qed}\examplex}
{\popQED\endexamplex}

\theoremstyle{plain}
\makeatletter
\newtheorem*{rep@theorem}{\rep@title}
\newcommand{\newreptheorem}[2]{%
	\newenvironment{rep#1}[1]{%
		\def\rep@title{#2 \ref*{##1}}%
		\begin{rep@theorem}}%
	{\end{rep@theorem}}}
\newreptheorem{theorem}{Theorem}
\makeatother

\theoremstyle{definition}
\newtheorem*{rep@definition}{\rep@title}
\newcommand{\newrepdefinition}[2]{%
	\newenvironment{rep#1}[1]{%
		\def\rep@title{#2 \ref*{##1}}%
		\begin{rep@definition}}%
	{\end{rep@definition}}}
\newrepdefinition{definition}{Definition}

\usepackage{tabularx}
\usepackage{array}

\newcommand{\bC}{\mathbb C}
\newcommand{\bZ}{\mathbb Z}

\newcommand{\bP}{\mathbb P}

\newcommand{\cC}{\mathcal C}

\newcommand{\cH}{\mathcal H}

\newcommand{\cO}{\mathcal O}
\newcommand{\cQ}{\mathcal Q}
\newcommand{\cS}{\mathcal S}

\newcommand{\cV}{\mathcal V}
\newcommand{\cW}{\mathcal W}
\newcommand{\cY}{\mathcal Y}

\renewcommand{\phi}{\varphi}

\newcommand{\gen}[1]{\langle {#1} \rangle}
\DeclareMathOperator{\im}{im}
\DeclareMathOperator{\Hom}{Hom}

\DeclareMathOperator{\Gr}{Gr}
\DeclareMathOperator{\Fl}{Fl}

\DeclareMathOperator{\rk}{rk}
\DeclareMathOperator{\Ann}{Ann}

\DeclareMathOperator{\codim}{codim}
\newcommand{\Con}{\mathrm{Con}}
\newcommand{\CL}{\mathrm{CL}}

\DeclareMathOperator{\sHom}{\mathcal{H}\kern -1pt\text{om}}
\newcommand{\crk}{\operatorname{crk}}

\usepackage{xcolor}

\usepackage[breakable,skins]{tcolorbox}
\newtcolorbox{tbox}[1][]{%
	breakable,
	enhanced,
	colframe=black,
	coltitle=white,
	#1
}

\title{\bf Conormal Rank of Subvarieties of Grassmannians}
\author{Elizabeth Pratt}
\date{}
\begin{document}
	\maketitle

	\begin{abstract}
	The cotangent space to any subvariety of a Grassmannian is naturally identified with a space of linear homomorphisms. We use this to define a new statistic on a subvariety of a Grassmannian, called \emph{conormal rank}. We show that Chow--Lam forms and their natural generalizations have low conormal rank, and moreover that hypersurfaces of sufficiently low conormal rank are all of this form, generalizing a result of Gelfand--Kapranov--Zelevinsky. We also compute the conormal rank of many families of varieties, including Schubert varieties, torus orbit closures, and positroid varieties.
	\end{abstract}

	\section{Introduction}

	The complex Grassmannian $\Gr(k,n)$ is a projective variety whose points parameterize $k$-dimensional subspaces of $\bC^n$. It is a well-known fact that the cotangent space to  $\Gr(k,n)$ at each point $L$  can be naturally identified with the space of linear homomorphisms from $\bC^n / L$ to $L$. Thus one may talk about the rank of a  cotangent vector as a homomorphism. We use this to define a new statistic for each subvariety of $\Gr(k,n),$ called \emph{conormal rank}. It is, in words, the rank of a general cotangent vector.

	\begin{definition}\label{def:rank}
		Let $\cY \subset \Gr(k,n)$ be an irreducible variety. Let $N^*_{\cY}\Gr(k,n)$ denote the conormal bundle above the regular locus. The \emph{conormal rank} $\crk(\cY)$ is the minimum nonnegative integer $r$ such that every cotangent vector $\eta \in N^*_{\cY}\Gr(k,n)$ has rank at most $r.$
	\end{definition}

	Varieties of  conormal rank one are called \emph{strongly coisotropic}, a term coined by Kohn and Mathews \cite{KohnMathews}. Their work generalized the earlier \emph{coisotropic hypersurfaces} of Gelfand, Kapranov, and Zelevinsky in \cite{gkz}, defined as hypersurfaces of conormal rank one. That is, the unique cotangent vector at every regular point is a homomorphism of rank one.\footnote{The original definition requires the conormal vector $\eta_L$ at every regular point $L$ to have rank exactly one. However, this is equivalent; at each regular point the conormal space is spanned by a nonzero vector.}
	An important instance of a coisotropic hypersurface is the classical Chow hypersurface of an embedded projective variety, defined by Chow and van der Waerden \cite{CvW}.

	\begin{definition}\label{def:chowlocus}
		Let $\cV \subset \bP^{n-1}$ be an irreducible variety of dimension $d.$ The \emph{Chow locus} of $\cV$ is defined as
		\[\cC_\cV := \overline{\{L \ : \ L \cap \cV \neq \varnothing\}} \ \subset \ \Gr(n-d-1,n).\]
		It is a hypersurface, and its defining equation ${\rm C}_\cV$ is called the \emph{Chow form} of $\cV.$
	\end{definition}

	\begin{example}[The twisted cubic]
		Let $\mathcal{V}$ be the closure of $(1:t:t^2:t^3)$ in $\bP^3$. Its Chow form in primal Pl\"ucker coordinates  is the determinant of the  {\em B\'ezout matrix}:
		\begin{equation}
			\label{eq:bezout}
			{\mathrm C}_\mathcal{V} \,\, = \,\, \det
			\begin{bmatrix}
				\,p_{12} & p_{13} & p_{14}  \,\\
				\,p_{13} & p_{14} + p_{23} & p_{24} \,  \\
				\,p_{14} & p_{24} & p_{34}\,
			\end{bmatrix}.
		\end{equation}
		Working locally in an affine chart $p_{12} \neq 0,$ we may represent points in the Grassmannian as rowspans of matrices of the form
		\[M = \begin{bmatrix}
			1 & 0 & a & b \\
			0 & 1 & c & d
		\end{bmatrix}.\]
		Then the Jacobian of the determinant, when arranged into a matrix, becomes \renewcommand{\arraystretch}{1.4}
		\[
		J = \begin{bmatrix}
			\frac{\partial f}{\partial a} & \frac{\partial f}{\partial b}\\
			\frac{\partial f}{\partial c} & \frac{\partial f}{\partial d}
		\end{bmatrix}
		=
		\begin{bmatrix}
			-c^2d+bc-2ad+2d^2 & c^3+ac-3cd-2b \\
			3bc^2-2acd+ab-3bd & -ac^2-a^2-3bc+4ad-3d^2
		\end{bmatrix}.
		\]
		\renewcommand{\arraystretch}{1}
		The determinant of $J$ is divisible by the determinant \eqref{eq:bezout} after $a, b, c, d$ are substituted for the Pl\"ucker variables. Thus the rank of each conormal vector is at most one on the intersection of the Chow hypersurface with this affine chart.
	\end{example}

	Chow forms can be generalized in two natural ways. \emph{Associated hypersurfaces}, introduced in \cite{gkz}, generalize Definition \ref{def:chowlocus} by increasing the dimension of the linear space $L$ in $\bP^{n-1}$ and asking for it to be tangent to $\cV$. This recovers other constructions in classical projective geometry; for example, the dual variety of $\cV$ parameterizes hyperplanes tangent to $\cV.$ Associated hypersurfaces have been studied in computer vision \cite{vision} and elsewhere in the sciences. Their degrees coincide with \emph{polar degrees} \cite{holme}, which are important invariants in metric algebraic geometry; see \cite[Chapter 4]{metricag}.

	Another generalization of the Chow form begins instead with a variety $\cV$ embedded in a Grassmannian $\Gr(k,n),$ where $k$ may be greater than one. The Chow--Lam form, defined in \cite{ChowLam}, captures incidence relations between $\cV$ and certain subGrassmannians. Chow--Lam forms were first studied by Thomas Lam in the context of scattering amplitudes, where the subvarieties in question are positroid varieties \cite[Section 19]{Lam}.

	One can do both generalizations simultaneously, fixing a variety in $\Gr(k,n)$ and asking which subGrassmannians $\Gr(k,Q)$ are tangent to it at a regular point. This yields \emph{principal Chow--Lam loci}\footnote{These were defined in \cite{ChowLam}, under the name \emph{higher Chow--Lam loci}; we change the name with permission of the authors and give further justification in Remark \ref{rmk:principal-vs-higher}.}, which we define presently. We summarize the constructions in Table \ref{tab:history}.

\begin{table}[h]
\centering
\renewcommand{\arraystretch}{1.3}
\begin{tabular}{|c|c|c|}
	\hline
 & $k = 1$ & $k \geq 1$ \\
\hline
 \shortstack{incidence \\ [4pt]$(\dim \Gr(k,Q) = \codim \cV - 1)$} & \rule{0pt}{7ex}\shortstack{Chow locus \cite{CvW}\\ [4pt] 1937} & \rule{0pt}{7ex}\shortstack{Chow--Lam locus \cite{ChowLam} \\ [4pt]2025} \\
\hline
\shortstack{tangency \\ [4pt]$(\dim \Gr(k,Q) \geq \codim \cV - 1)$} & \rule{0pt}{7ex}\shortstack{Associated hypersurface \cite{gkz}\\ [4pt] 1994} & \rule{0pt}{7ex}\shortstack{Principal Chow--Lam locus \cite{ChowLam} \\ [4pt] 2025} \\
\hline
\end{tabular}
\caption{Different conditions on subGrassmannians w.r.t. a fixed variety $\cV \subseteq \mathrm{Gr}(k,n)$.}\label{tab:history}
\end{table}


\begin{definition}\label{def:principal cl}
Let $\cV\subset\Gr(k,n)$ be an irreducible variety of dimension
$k(n-\ell)-1+i$, where $k< \ell < n$ and $0\leq i\leq k(\ell - k).$ Define the incidence variety
\[
\Phi_{\cV}^i
=
\overline{
\left\{
(L,Q):
L\in\cV_{\rm reg}
\text{ and }
\Gr(k,Q)\cap\cV
\text{ is not transverse at }L
\right\}} \, \subset \, \Fl(k,\ell,n).
\]
The $i$th \emph{principal Chow--Lam locus} $\mathcal{PCL}_{\cV}^i$ is the projection of $\Phi_{\cV}^i$ to $\Gr(\ell,n).$ If $\mathcal{PCL}_{\cV}^i$ is a hypersurface, its equation is
called the $i$th \emph{principal Chow--Lam form} and denoted ${\rm PCL}^i_\cV$.
\end{definition}

A major structural result of Gelfand, Kapranov and Zelevinsky is that all associated hypersurfaces have rank one, and moreover that \emph{every} rank one hypersurface is an associated hypersurface of a projective variety \cite[Theorem 4.3.14]{gkz}. The second fact is perhaps more surprising, because one must reconstruct the projective variety $\cV$ from the associated hypersurface. In this article we prove analogous results for the principal Chow--Lam loci.

\begin{theorem}\label{thm:rankk}
Let $\cV\subset\Gr(k,n)$ satisfy the dimension hypothesis in
Definition~\ref{def:principal cl}. If $\mathcal{PCL}^i_{\cV}$ is a
hypersurface, then its conormal rank is at most $\min(k, n-\ell).$
\end{theorem}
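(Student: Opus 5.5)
The trivial bound comes first. A cotangent vector to $\Gr(\ell,n)$ at $Q$ is an element of $\Hom(\bC^n/Q, Q)$, so its rank is at most $\min(\ell, n-\ell)$. The content of the theorem is the bound by $k$, and for that I will compute the conormal space directly through the incidence variety $\Phi^i_\cV$.

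\textbf{Conormal vector at a general point.} I will fix a general point $(L,Q)\in\Phi^i_\cV$ with $L\in\cV_{\rm reg}$ and $Q$ a regular point of the hypersurface $\mathcal{PCL}^i_\cV$. I will also use that $\Phi^i_\cV\to\mathcal{PCL}^i_\cV$ has generically surjective differential, by generic smoothness in characteristic zero. Failure of transversality at $L$ means that $T_L\Gr(k,Q)+T_L\cV\neq T_L\Gr(k,n)$. Dually, there is a nonzero $\xi\in N^*_L\cV\subset\Hom(\bC^n/L,L)$ that annihilates $T_L\Gr(k,Q)=\Hom(L,Q/L)$. Under the trace pairing this says exactly that $\xi$ vanishes on $Q/L$, so $\xi$ factors as
\[
\bC^n/L\twoheadrightarrow \bC^n/Q\xrightarrow{\ \bar\xi\ } L .
\]
Composing with the inclusion $L\subset Q$ gives an element $\eta=\iota\circ\bar\xi\in\Hom(\bC^n/Q,Q)=T^*_Q\Gr(\ell,n)$. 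Since $\eta$ factors through $L$, which has dimension $k$, and its source $\bC^n/Q$ has dimension $n-\ell$, we get $\operatorname{rank}\eta\le\min(k,n-\ell)$.

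\textbf{Main step: $\eta$ is conormal to $\mathcal{PCL}^i_\cV$ at $Q$.} Because the conormal space at a regular point of a hypersurface is a line, it suffices to show that $\eta$ annihilates $T_Q\mathcal{PCL}^i_\cV$, which is the image of $T_{(L,Q)}\Phi^i_\cV$. I plan to realize the conormal line concretely as follows. Consider the variety
\[
\widetilde\Phi=\{(L,Q,\xi): L\in\cV_{\rm reg},\ \xi\in N^*_L\cV,\ \xi|_{Q/L}=0\}.
\]
This is the pullback of the conormal variety $N^*\cV\subset T^*\Gr(k,n)$, which is Lagrangian, along the correspondence $\Fl(k,\ell,n)$. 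The fiberwise condition $\xi|_{Q/L}=0$ is exactly what lets one push $\xi$ forward to $T^*\Gr(\ell,n)$ as $\eta$. This pushforward of the Lagrangian $N^*\cV$ through the conormal correspondence of the flag variety is standard Lagrangian correspondence calculus, and I expect it to land inside the closure of the conormal variety of $\mathcal{PCL}^i_\cV$.

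To verify this concretely, I would take a curve $(L(t),Q(t),\xi(t))$ in $\widetilde\Phi$. Differentiating $\langle\xi(t),\dot L\rangle=0$ and using the identity $\xi(t)|_{Q(t)}=0$, the pairing $\langle\eta,\dot Q\rangle$ should reduce to the pairing of $\xi$ with the $\Hom(L,\bC^n/L)$-component of $\dot L$. That component lies in $T_L\cV$, so the pairing is zero. I expect this bookkeeping, done in local charts adapted to $L\subset Q$, to be the main obstacle. The key facts it uses are $\xi\in N^*_L\cV$ and $\xi(Q)=0$.

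\textbf{Concluding.} It remains to show that $\eta\neq0$ at a general point, which follows since $\xi\neq0$ and $\iota$ is injective. Then, by the dimension hypothesis, the image of $\widetilde\Phi$ in $T^*\Gr(\ell,n)$ is the conormal variety of the hypersurface. Since every cotangent vector in the conormal line at a general point has rank at most $\min(k,n-\ell)$, and rank is lower semicontinuous, the bound extends to all of $N^*_{\mathcal{PCL}^i_\cV}\Gr(\ell,n)$. This gives $\crk(\mathcal{PCL}^i_\cV)\le\min(k,n-\ell)$.
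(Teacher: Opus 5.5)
Your outline is the paper's argument (Lemma~\ref{lem:pcl containment} followed by Lemma~\ref{lem:eimage}). However, the one step with real content, that $\eta$ annihilates $T_Q\mathcal{PCL}^i_\cV$, is only announced (``should reduce'', ``I expect''). Worse, the global statements you offer in its support are false.

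\textbf{The global claims.} The pushforward of $N^*_\cV\Gr(k,n)$ through the flag correspondence does not in general land in $\Con(\mathcal{PCL}^i_\cV)$, and the image of $\widetilde\Phi$ is not that conormal variety. In Example~\ref{eg:strict containment}, $\cV=V(q_{12},q_{34},q_{56})\subset\Gr(2,6)$ and $\Con(\mathcal{PCL}^2_\cV)\subset D_1$, yet rank-two covectors of $\cV$ push forward to rank-two $\eta$'s. Those come from the part of $\Phi^2_\cV$ built on rank-two covectors, whose image in $\Gr(4,6)$ is not a divisor (Example~\ref{eg:cl curve join}). Components of $\Phi^i_\cV$ like this are also why your generic-smoothness claim must be made componentwise. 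Fix an irreducible component $\cH$ of $\mathcal{PCL}^i_\cV$ (conormal rank is defined for irreducible varieties), choose a component $\Phi_\cH$ of $\Phi^i_\cV$ dominating it, and take $(L,Q)$ general in $\Phi_\cH$. This is what the paper does.

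\textbf{The missing step.} At such a point it is a one-line trace computation; you need no curves in $\widetilde\Phi$, no variation of $\xi$, and no charts. A tangent vector $(\alpha,\beta)\in T_{(L,Q)}X$ lies in $T_{(L,Q)}\Fl(k,\ell,n)$ exactly when $p\circ\alpha=\beta|_L$, where $p:\bC^n/L\twoheadrightarrow\bC^n/Q$; this is the kernel of $ds$ in Lemma~\ref{lem:conormal}. With $\xi=\bar\xi\circ p$ and $\eta=\iota\circ\bar\xi$, cyclicity of the trace gives
\[
\operatorname{tr}(\eta\circ\beta)=\operatorname{tr}(\bar\xi\circ\beta|_L)=\operatorname{tr}(\bar\xi\circ p\circ\alpha)=\operatorname{tr}(\xi\circ\alpha).
\]
Given $\beta\in T_Q\cH$, lift it to $(\alpha,\beta)\in T_{(L,Q)}\Phi_\cH$. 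Then $\alpha\in T_L\cV$, since $\Phi_\cH$ projects into $\cV$ and $L\in\cV_{\rm reg}$. So the right-hand side vanishes because $\xi\in N^*_L\cV$. This is exactly the statement that $(\xi,\eta)$ is, up to sign, conormal to $F$ in $X$, which is how the paper phrases it. With $\eta\neq 0$ and closedness of $D_r$, your concluding paragraph then goes through, once you drop the sentence identifying the image of $\widetilde\Phi$ with the conormal variety.
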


The converse statement is more subtle. The difficulty is that principal Chow--Lam loci, except for the original Chow--Lam case, may be reducible. This contrasts with associated hypersurfaces, which are all irreducible by a trick of Cayley; see \cite[Proposition 5]{kohn}. To remedy this, we introduce the \emph{Chow--Lam discriminant} (Definition \ref{def:cl discriminant}), which is an irreducible factor of the principal Chow--Lam form. The naming convention is chosen to mimic the principal $A$-determinant and the $A$-discriminant from toric geometry. The analogous result to the GKZ result is that every hypersurface of conormal rank $k$ is a Chow--Lam discriminant of a subvariety of $\Gr(k,n).$

\begin{theorem}\label{thm:cl recovery}
Let $\cH \subset \Gr(\ell,n)$ be a rank $k$ hypersurface, where $k < \ell.$ Let $\cV := \pi(\Lambda_\cH)$ be the projection of its conormal reduction to $\Gr(k,n)$. If $\dim \cV = k(n - \ell) - 1+i$ for some $0 \leq i \leq k(\ell - k),$ then $\cH$ equals the Chow--Lam discriminant $\mathcal{CL}^i_\cV$ of $\cV.$
\end{theorem}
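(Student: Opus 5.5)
We need to prove Theorem \ref{thm:cl recovery}. The terms ``conormal reduction'' $\Lambda_\cH$ and ``Chow--Lam discriminant'' are defined later in the paper, so I fix working versions. For a rank $k$ hypersurface $\cH\subset\Gr(\ell,n)$, a regular point $Q$ has conormal line spanned by $\eta_Q\in\Hom(\bC^n/Q,Q)$ of rank $k$. I take $\Lambda_\cH\subset\Fl(k,\ell,n)$ to be the closure of the pairs $(\im\eta_Q,Q)$, so $\pi(\Lambda_\cH)=\cV\subset\Gr(k,n)$. I take $\mathcal{CL}^i_\cV$ to be the distinguished irreducible component of $\mathcal{PCL}^i_\cV$: the image of the component of $\Phi^i_\cV$ on which the conormal vector of $\cV$ at $L$ restricts to a nondegenerate (rank $k$) form on the tangent space of $\Gr(k,Q)$.

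The strategy copies the GKZ argument (\cite[Theorem 4.3.14]{gkz}) through a Lagrangian correspondence on the partial flag variety. Let $X=\Fl(k,\ell,n)$, with projections $\pi:X\to\Gr(k,n)$ and $p:X\to\Gr(\ell,n)$. The conormal variety $N^*_\cH\Gr(\ell,n)$ is conic Lagrangian in $T^*\Gr(\ell,n)$. The key linear-algebra observation is as follows. A covector $\eta\in\Hom(\bC^n/Q,Q)$ with image $L\subset Q$ (so $\dim L=k$) and kernel containing $L'/Q$ is the pullback of a covector at $L\in\Gr(k,n)$ under the correspondence. Concretely, $\eta$ factors as $\bC^n/Q\to\bC^n/L\xrightarrow{\xi} L\hookrightarrow Q$ exactly when $\eta$ kills nothing more than what is forced, and this factorization is automatic when $\rk\eta=k$ and $\im\eta=L$. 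Hence the conormal variety of $\cH$ lies in the image of $T^*\Gr(k,n)\times_{\Gr(k,n)}X$ in $T^*\Gr(\ell,n)$ under the natural maps. The Lagrangian condition on $N^*_\cH$, pulled back along this correspondence, should force each $\xi$ to annihilate $T_L\cV$. The planned steps are:

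\begin{enumerate}[label=(\arabic*)]
\item Show that $\xi$ is conormal to $\cV$ at a general point $L$. I would compute with the symplectic forms: the correspondence is a Lagrangian relation between $T^*\Gr(k,n)$ and $T^*\Gr(\ell,n)$, namely the conormal bundle to $X$ in the product. Composing the Lagrangian $N^*_\cH$ with it gives a conic isotropic subvariety of $T^*\Gr(k,n)$ lying over $\cV$. Any conic isotropic variety projecting onto $\cV$ is contained in $N^*_\cV\Gr(k,n)$.
\item Conclude that for general $(L,Q)\in\Lambda_\cH$ the covector $\xi_L$ is conormal to $\cV$ and restricts to $T_L\Gr(k,Q)=\Hom(Q/L,L)$ as a nonzero form. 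Then $\Gr(k,Q)$ fails to be transverse to $\cV$ at $L$, so $\Lambda_\cH\subseteq\Phi^i_\cV$ and $\cH=p(\Lambda_\cH)\subseteq\mathcal{PCL}^i_\cV$. Rank exactly $k$ places us in the distinguished component, so $\cH\subseteq\mathcal{CL}^i_\cV$.
\item Both sides are irreducible hypersurfaces: $\cH$ by hypothesis, and the discriminant by definition. The dimension hypothesis $\dim\cV=k(n-\ell)-1+i$ is exactly what makes $\mathcal{CL}^i_\cV$ a hypersurface. I would verify this by a dimension count on the fibers of $\Phi^i_\cV\to\cV$ (codimension of non-transverse $Q$'s). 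Equality then follows.
\end{enumerate}

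The main obstacle is step (1): proving that the image $\cV$ is genuinely ``dual'' to $\cH$ rather than just some variety carrying the images. One must check that the composed Lagrangian relation is generically clean, so that its image is really isotropic of the correct dimension, which requires controlling the fibers of $\Lambda_\cH\to\cV$. I would first try a direct local computation in the chart $Q=\mathrm{rowspan}[I\,|\,A]$. There I would write $dF$ as a rank $k$ matrix $\eta=uv^T$-type factorization with $u$ spanning $L$. Then I would differentiate the identity $F(A)=0$ along curves in $\cH$ with fixed $L$ to show that $d\xi$ vanishes on $T\cV$, mirroring the GKZ proof that the image of the rank-one Gauss map is the reconstructed projective variety.
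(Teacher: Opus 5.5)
Your outline follows the paper's route. You push the rank-$k$ conormal vectors of $\cH$ to $T^*\Gr(k,n)$ via $\rho(Q,\eta)=(\im\eta,\eta\circ p)$, observe that the image is conic and isotropic and hence lies in $\Con(\cV)$, and compare with the discriminantal incidence. For the isotropy you need no cleanness or fiber control: $\rho$ is a morphism on $D^\circ_k$ with $\rho^*\lambda_k=\lambda_\ell$ there (Lemma~\ref{lem:tautological}), so $\lambda_k$, and hence $\omega_{\Gr(k,n)}=-d\lambda_k$, vanishes on $\rho(\Con(\cH)^\circ_k)$. This is also why $\Lambda_\cH$ should be kept in $T^*\Gr(k,n)$ rather than in $\Fl(k,\ell,n)$.

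However, step (2) is logically inverted. Non-transversality at $L$ means $\Ann(T_L\cV)\cap\Ann(T_L\Gr(k,Q))\neq 0$, so a conormal vector of $\cV$ that is \emph{nonzero} on $T_L\Gr(k,Q)$ proves nothing. The correct, and immediate, fact is the opposite. Write $\eta=\iota\circ u$ with $u\colon\bC^n/Q\to L$ and $\iota\colon L\hookrightarrow Q$. Then $\xi=u\circ p$ with $p\colon\bC^n/L\twoheadrightarrow\bC^n/Q$; there is no natural map $\bC^n/Q\to\bC^n/L$ as in your factorization. Hence $\xi(Q/L)=0$, so $\xi$ annihilates $T_L\Gr(k,Q)=\Hom(L,Q/L)$, not $\Hom(Q/L,L)$ (Lemma~\ref{lem:eimage}). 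Also $\rk\xi=k$ because $p$ is surjective. Your working definition of the discriminant needs the same correction: the condition is that some rank-$k$ conormal vector $\phi$ of $\cV$ at $L$ kills $Q/L$, and the incidence lives in $N^*_FX$.

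Step (3) rests on a false claim: the dimension hypothesis does not make $\mathcal{CL}^i_\cV$ a hypersurface. In Example~\ref{eg:cl curve join}, $\cV=V(q_{12},q_{34},q_{56})\subset\Gr(2,6)$ satisfies it with $i=2$ and has conormal rank $2$, yet $\CL^2_\cV=1$. A fiber count only gives an upper bound. The incidence over $\Con(\cV)^\circ_k$ is a $\Gr(\ell-k,n-2k)$-bundle of dimension $k(n-k)+(\ell-k)(n-\ell-k)=\ell(n-\ell)$, independently of $i$. So its image in $\Gr(\ell,n)$ has dimension at most $\ell(n-\ell)-1$, since covectors are nonzero and scaling does not move $Q$. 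But the projection may have positive-dimensional fibers, so no lower bound comes from this count.

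The lower bound has to come from the containment $\cH\subseteq\mathcal{CL}^i_\cV$ produced by steps (1)--(2). Irreducibility of the discriminantal locus is also not ``by definition''. It holds because the incidence is a bundle with irreducible fiber over the irreducible $\Con(\cV)^\circ_k$ (Theorem~\ref{thm:cldisc irreducible}). With this reordering, $\cH$ is contained in an irreducible closed variety of dimension at most $\dim\cH$, so the two coincide. The hypersurface property of $\mathcal{CL}^i_\cV$ is then an output, not an input. The paper runs the same comparison one level up: it shows $\Con(\cH)\subseteq\overline{p_2(p_1^{-1}(\Con(\cV)^\circ_k))}$ with both sides irreducible of dimension $\dim\Gr(\ell,n)$, and then projects to $\Gr(\ell,n)$.
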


The paper is organized as follows. In Section \ref{sec:conormal}, we review the conormal bundle and conormal varieties (Subsection \ref{subsec:conormal variety}), and introduce conormal rank (Subsection \ref{subsec:conormal rank}). We compute the conormal rank for various varieties, including Schubert varieties and general torus orbit closures. In Section \ref{sec:principal cl}, we recall the definition of the principal Chow--Lam locus from \cite{ChowLam} and give examples. In Section \ref{sec:cl discriminant} we introduce the Chow--Lam discriminant and prove that it is irreducible (Theorem \ref{thm:cldisc irreducible}). Section \ref{sec:rank conditions} is dedicated to the proof of Theorem \ref{thm:rankk}. In Section \ref{sec:recovery} we give a procedure for recovering, from each rank $k$ hypersurface $\cH$ in $\Gr(\ell, n),$ a subvariety $\cV$ of the Grassmannian $\Gr(k,n)$. We then prove Theorem \ref{thm:cl recovery}. Finally, Section \ref{sec:positroids} studies conormal rank of positroid varieties, and how to tell when a polynomial is a Chow--Lam form of a positroid variety.

	\section{Conormal varieties and conormal rank}\label{sec:conormal}

	This section introduces conormal bundles, conormal varieties, and conormal rank. The first two are standard definitions in the literature and the last is introduced in this paper.

	\subsection{Conormal varieties}\label{subsec:conormal variety}

	We review some relevant facts about conormal varieties and conormal bundles. Throughout, let $X$ be a smooth complex projective variety and let $Y$ be a closed subvariety of $X$ with regular locus $Y_{\rm reg}.$

	\begin{definition}
		The conormal bundle of $Y$ in $X$ is
		\[
		N^*_Y X
		\;:=\;
			\left\{
			(x,\xi) \in T^*X
			\;\middle|\;
			x \in Y_{\mathrm{reg}},\
			\xi|_{T_x Y_{\rm reg}} = 0
			\right\}
		\;\subset\; T^*X,
		\]
	where the bundle projection to $X$ is inherited from that of the cotangent bundle $T^*X.$ If the pair $(x, \xi)$ is in $N^*_YX,$ we call $\xi$ a \emph{covector} at $x$.
	\end{definition}

	The conormal bundle is a vector bundle over the regular locus of $Y$. For a non-smooth variety $Y$, the cotangent spaces will drop in dimension at singular points, so $N^*_Y X$ does not extend to a vector bundle over $Y$. The conormal variety is the correct replacement.

	\begin{definition}[Conormal variety]\label{def:conormal variety}
		The conormal variety of \(Y\) in \(X\) is
		\[
		\Con_Y (X)
		\;:=\;
		\overline{
			N^*_Y X
		}
		\;\subset\; T^*X,
		\]
		where the closure is taken in the Zariski topology. If $Y = \cup_i Y_i$ is reducible, we define its conormal variety to be the union $\cup_i \Con_{Y_i}X$ of the conormal varieties of the components. When $Y$ is smooth, the conormal variety equals the total space of the conormal bundle of $Y.$
	\end{definition}

	Conormal varieties are special subvarieties of $T^*X.$ There is a $\bC^*$ action on the cotangent bundle $T^*X$ which acts on each cotangent fiber by scalar multiplication. Conormal varieties are \emph{conic}, namely invariant under this action. They are \emph{Lagrangian}; that is, they are irreducible of dimension $\dim X$  and isotropic with respect to the natural symplectic form $\omega_X$ on the cotangent bundle of $X.$ In local coordinates $(x_1, \ldots, x_n, \xi_1, \ldots, \xi_n)$, this form looks like
	\[\omega_X = \sum_{i=1}^n {\rm d}x_i \wedge {\rm d}\xi_i.\]
	Moreover, these properties characterize conormal varieties. Indeed, every irreducible conic Lagrangian
subvariety of $T^*X$ is the conormal variety of its
projection to $X$; see e.g. \cite[Lemma 4.2]{behrend}.

	\begin{remark}
	With our conventions conormal varieties are quasi-affine, but not projective. When $X$ is projective space $\bP^n,$ some authors define the conormal variety as the closure
	\begin{align*}
		\overline{\{(y, H)  \ | \ y \in Y_{\text{reg}}, \ T_yY \, \subset \, H\}} \ \subset \ \bP^n \times (\bP^n)^\vee.
	\end{align*}
	This yields a projective variety of dimension $n-1$, which is isomorphic to the projectivization of the twisted conormal bundle $N^*_{Y}\bP^n (1).$ Indeed, the twisted Euler sequence and conormal sequence give an injection $N^*_{Y }\bP^n(1) \to \cO_Y^{n + 1}.$ Projectivizing these bundles then determines an embedding $\bP(N^*_{Y}\bP^n (1)) \to Y \times (\bP^n)^\vee.$ This convention is used for instance in \cite{kohn}.
	\end{remark}

	The following lemma says that the total space of the conormal bundle is dense in the conormal variety, and the Zariski closure adds nothing over the regular locus. It is used in a minor way in Section \ref{sec:cl discriminant}, after Definition \ref{def:cl discriminant}.

	\begin{lemma}\label{lem:con vs bundle}
	Let $Y$ be a subvariety of $X$, write $Y_{\rm sing} := Y \setminus Y_{\rm reg}$
	for its singular locus, and let $\pi : T^*X \to X$ denote the bundle map. Then
	\[
	\Con_Y(X) \, \cap \, \pi^{-1}\bigl(X \setminus Y_{\rm sing}\bigr)
	\;=\;
	N^*_Y X.
	\]
	In particular, $N^*_YX$ is a dense open subset of $\Con_Y(X),$ and
	$\Con_Y(X) \setminus N^*_YX$ lies over $Y_{\rm sing}.$
\end{lemma}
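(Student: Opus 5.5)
The plan is to prove the two inclusions separately, working fiberwise over points of $X$. The inclusion $N^*_YX \subseteq \Con_Y(X)\cap \pi^{-1}(X\setminus Y_{\rm sing})$ is immediate, since $N^*_YX$ lies over $Y_{\rm reg}\subseteq X\setminus Y_{\rm sing}$ and is contained in its closure. For the reverse inclusion, first reduce to irreducible $Y$. When $Y$ is reducible, a point of $Y_{\rm reg}$ lies on exactly one component $Y_i$, and it is a regular point of $Y_i$ with the same tangent space. Any other component $Y_j$ is closed and avoids $Y_{\rm reg}\cap Y_i$ near such points; intersections of components are contained in $Y_{\rm sing}$. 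So the union of the $\Con_{Y_j}X$ restricted over $X\setminus Y_{\rm sing}$ splits as a disjoint union of the irreducible cases.

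Assume now that $Y$ is irreducible. Since $\pi$ maps $N^*_YX$ into $Y$, which is closed, $\Con_Y(X)$ lies over $Y$. Hence $\Con_Y(X)\cap\pi^{-1}(X\setminus Y_{\rm sing})$ lies over $Y_{\rm reg}$, and it suffices to show that $\Con_Y(X)\cap \pi^{-1}(Y_{\rm reg})\subseteq N^*_YX$. The key observation is that $N^*_YX$ is closed in $\pi^{-1}(Y_{\rm reg}) = T^*X|_{Y_{\rm reg}}$. Over the smooth variety $Y_{\rm reg}$, the tangent spaces $T_yY$ form a subbundle of $TX|_{Y_{\rm reg}}$. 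Its annihilator is therefore a subbundle of $T^*X|_{Y_{\rm reg}}$, namely the kernel of the surjective bundle map $T^*X|_{Y_{\rm reg}}\to T^*Y_{\rm reg}$.

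Concretely, I would check this in local coordinates. Near $y\in Y_{\rm reg}$, choose local equations $f_1,\dots,f_c$ of $Y$ whose differentials are independent. Then $N^*_YX$ is the image of the map $(y,\lambda)\mapsto (y,\sum\lambda_j\,{\rm d}f_j)$, and this image is the zero set of the equations expressing that $\xi$ pairs to zero with a local frame of $TY$. The kernel of a morphism of vector bundles is Zariski closed, so $N^*_YX$ is a closed subset of the open set $\pi^{-1}(Y_{\rm reg})$.

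It then follows by general topology that the closure of $N^*_YX$ in $T^*X$, intersected with the open set $\pi^{-1}(Y_{\rm reg})$, equals the closure of $N^*_YX$ relative to that open set, which is $N^*_YX$ itself. This gives the equality. For the density statement, $N^*_YX$ is open in $\Con_Y(X)$, being the intersection with the open set $\pi^{-1}(X\setminus Y_{\rm sing})$, and it is dense by the definition of closure. The complement $\Con_Y(X)\setminus N^*_YX$ therefore lies over $Y_{\rm sing}$.

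The only step requiring care is the claim that the annihilator of $TY$ forms a genuine algebraic subbundle over $Y_{\rm reg}$, i.e.\ that it is locally closed of constant rank. This follows because $Y_{\rm reg}$ is smooth of constant dimension, so $TY_{\rm reg}\to TX|_{Y_{\rm reg}}$ is an injection of vector bundles. The remaining subtlety is the bookkeeping in the reducible case.
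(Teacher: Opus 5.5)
Your proposal is correct and follows the paper's proof: both arguments show that $N^*_YX$ is closed in the open set $U=\pi^{-1}(X\setminus Y_{\rm sing})$, then use $\overline{S}\cap U=S$ for $S$ closed in $U$. Your subbundle argument and the reduction to irreducible components only spell out what the paper's one-line justification (``cut out by $x\in Y$ and $\xi|_{T_xY}=0$'') leaves implicit. One small inaccuracy is harmless: $\pi^{-1}(Y_{\rm reg})$ is not open in $T^*X$, only closed in $U$, but $\overline{S}\cap Z$ equals the closure of $S$ in $Z$ for any subspace $Z$, so your step still goes through.
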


\begin{proof}
	Write $U := \pi^{-1}(X \setminus Y_{\rm sing}),$ an open subset of $T^*X$
	containing $N^*_YX.$ Inside $U$ the subset $N^*_YX$ is closed, since it is cut out by the conditions $x \in Y$ and $\xi|_{T_xY} = 0$. Note that for any subset $S$
	which is closed in an open set $U$ we have $\overline{S} \cap U = S.$ Applying this to
	$S = N^*_YX$ gives the
	displayed equality and subsequent statements.
\end{proof}

	\subsection{Conormal rank}\label{subsec:conormal rank}
    In this section we define the conormal rank of a subvariety of the Grassmannian and establish its basic properties. Each fiber of the cotangent bundle $T^*\Gr(k,n)$ has a concrete description as a space of homomorphisms. Let $\cS$ denote the universal sub-bundle and $\cQ$ denote the universal quotient bundle on $\Gr(k,n),$ whose fibers at a point $L$ are given by $L$ and $\bC^n / L,$ respectively. Then $T\Gr(k,n) \cong \sHom(\cS, \cQ)$; see e.g. \cite[B.5.8]{fulton}. Thus the tangent space at a point $L$ can be naturally identified with the space $\Hom(L, \bC^n/L)$ of homomorphisms. 

	Consider now the trace inner product defined by
	\begin{equation*}
	\begin{split}
	\Hom(L, \bC^n/L) \times \Hom(\bC^n/L, L) & \to \bC \\
	(u, v) & \mapsto \mathrm{tr}(u \circ v) = \mathrm{tr}(v \circ u).
	\end{split}
	\end{equation*}
	This inner product realizes the isomorphism $(\cS^* \otimes \cQ)^* \cong  \cQ^* \otimes \cS.$ Concretely, it identifies $T^*_L \Gr(k,n) = \Hom(L, \bC^n/L)^*$ with $\Hom(\bC^n/L, L).$ This allows us to attach an extra statistic to each conormal vector: its rank as a homomorphism.

    \begin{definition}
    For a nonnegative integer $r$, we define the rank $r$ locus to be
    \[
D_r := \{(L,\eta) : \operatorname{rk}(\eta) \leq r\}
\subset T^*\Gr(k,n).
\]
    The \emph{rank stratification} of the cotangent bundle $T^*\Gr(k,n)$ is
    \[D_0 \subset D_1 \subset \ldots \subset D_{\min (k, n - k)} = T^*\Gr(k,n).\]
    \end{definition}

   The definition of conormal rank from the introduction is more naturally written using the language of rank stratifications.
    \begin{definition}\label{def:conormal rank}
    Let $\cY \subset \Gr(k,n)$ be an irreducible variety. The \emph{conormal rank} of $\cY$ is the smallest integer $r$ such that $N^*_{\cY}\Gr(k,n)$ is contained in $D_r.$
    \end{definition}

    In particular, $D_r$ is closed, so it contains $\Con_{\cY}\Gr(k,n)$ whenever it contains $N^*_\cY \Gr(k,n).$ We show that Definition \ref{def:conormal rank} is equivalent to imposing rank conditions on a general covector.

    \begin{lemma}\label{lem:crk open}
    Let $\cY \subset \Gr(k,n)$ be an irreducible variety. The conormal rank of $\cY$ is the rank of $\eta$ for a general covector $\eta \in \Con_\cY\Gr(k,n).$ For a general point $L \in \cY$, the conormal rank of $\cY$ equals the maximum rank of the covectors in the fiber $N^*_\cY\Gr(k,n)_L.$
    \end{lemma}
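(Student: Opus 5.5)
The plan is to use two facts: the rank loci $D_r$ are closed, and their complements are open, applied inside the irreducible variety $\Con_\cY\Gr(k,n)$. Let $r=\crk(\cY)$ be as in Definition~\ref{def:conormal rank}, so $N^*_\cY\Gr(k,n)\subset D_r$ but $N^*_\cY\Gr(k,n)\not\subset D_{r-1}$. First I check that $D_s$ is Zariski closed in $T^*\Gr(k,n)$ for every $s$. Locally on an open affine chart $U\subset\Gr(k,n)$ trivialize $\cS$ and $\cQ$. Then $T^*U\cong U\times \Hom(\bC^{n-k},\bC^k)$, and $D_s\cap T^*U$ is cut out by the $(s+1)\times(s+1)$ minors of the matrix of $\eta$. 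These are polynomial in the fiber coordinates and regular in the base coordinates.

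For the first statement, $\Con_\cY\Gr(k,n)$ is irreducible, being the closure of the total space of a vector bundle over the irreducible smooth variety $\cY_{\rm reg}$. Consider $W:=\Con_\cY\Gr(k,n)\setminus D_{r-1}$. It is open in $\Con_\cY\Gr(k,n)$, and it is nonempty because some covector in $N^*_\cY\Gr(k,n)$ has rank at least $r$. Hence $W$ is dense. Since $D_r$ is closed and contains $N^*_\cY\Gr(k,n)$, it contains $\Con_\cY\Gr(k,n)$. So every covector in $W$ has rank exactly $r$, which means a general covector has rank $r$.

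For the second statement, restrict to the bundle $N:=N^*_\cY\Gr(k,n)\to\cY_{\rm reg}$. The set $W':=N\setminus D_{r-1}$ is nonempty and open in $N$. Its image under the bundle projection is open in $\cY_{\rm reg}$, because the projection of a vector bundle is flat, hence open. Alternatively, argue directly: over a trivializing chart, $\cY_{\rm reg}\setminus \pi(W')$ is the locus where all $r\times r$ minors of $\eta$ vanish identically for every $\eta$ in the fiber. Choosing a local frame $\eta_1,\dots,\eta_c$ of $N$, this is the vanishing of the $r\times r$ minors of $\sum t_j\eta_j$ as polynomials in $t$, i.e.\ of their coefficients, which is a closed condition. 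So $\pi(W')$ is a nonempty open, hence dense, subset of $\cY_{\rm reg}$, and therefore of $\cY$. For $L$ in this set, the fiber contains a covector of rank at least $r$. Every covector in the fiber has rank at most $r$ because $N\subset D_r$. So the maximum rank over the fiber equals $r$.

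The main obstacle is the openness of the projection in the second statement. I expect to handle it with the local-frame/minor-coefficient argument, since that avoids invoking flatness. One should also note that "general point $L\in\cY$" means $L$ in a dense open subset of $\cY_{\rm reg}$, where the fiber of $N^*_\cY\Gr(k,n)$ is defined.
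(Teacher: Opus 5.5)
Your proof is correct and takes essentially the same approach as the paper. Closedness of the rank loci $D_s$ forces the maximal rank to be the generic rank on the irreducible conormal variety. The vector bundle structure over $\cY_{\rm reg}$ then makes the set of maximal-rank covectors project to an open, hence dense, subset of $\cY$. You add detail the paper leaves implicit: the local description of $D_s$ by minors, and an explicit argument, via flatness or local frames, that this projection is open.
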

    \begin{proof}
    The dropping of rank is a closed condition on $\Con_\cY\Gr(k,n),$ so the maximum rank of a covector equals the generic rank. Since $N^*_\cY\Gr(k,n)$ is a vector bundle on $\cY_{\rm{reg}}$, the set of covectors $\eta$ whose rank attains the maximum projects to an open set in $\cY.$ This proves the second statement.
    \end{proof}

	\begin{example}
		Suppose that $k=1.$ Every proper irreducible subvariety $\cV$ of projective space has conormal rank one. Indeed, the rank of each nonzero conormal vector is one.
	\end{example}

	\begin{example}[Coisotropic hypersurfaces in $\Gr(2,4)$]
		A \emph{coisotropic hypersurface} is a hypersurface of conormal rank one. The name ``coisotropic" is because the cotangent vector to such a hypersurface at regular point is isotropic with respect to the determinant form.
		
		For a concrete example, consider the $\Gr(2,4)$ case. The tangent space at a point $L$ is $V := \Hom(L, \bC^4 / L).$ Any vector $w \in V^*$ corresponds to a $ 2 \times 2$ matrix $M_w$. Then $\gen{w, w} = \det (M_w),$ which equals zero whenever $M_w$ has rank one.

		By \cite[Theorem 4.3.14]{gkz}, every coisotropic hypersurface in $\Gr(2,4)$ is either the \emph{Chow locus} of a curve in $\bP^3,$ parameterizing projective lines which meet the curve, or the \emph{Hurwitz locus} of a surface in $\bP^3,$ parameterizing projective lines tangent to the surface.
	\end{example}

	\begin{example}[Schubert divisors]
		Every Schubert divisor is the Chow hypersurface of a linear space, hence conormal rank one by \cite[Theorem 4.3.14]{gkz}. We may also see this concretely as follows. Let $Q \subset \bC^n$ have codimension $k$ and let  $\Omega(Q)\subset \Gr(k,n)$ be the Schubert divisor of $k$-dimensional linear spaces in $\bC^n$ meeting $Q$ in a dimension at least one. It is a standard fact that
		\[T_L \Omega(Q) = \{\phi: L \to \bC^n / L \, : \, \phi(L \cap Q) \subset Q / Q \cap L\}.\]
		Thus the conormal vector $\phi: \bC^n / L \to L$ is determined up to scalar by the conditions
		\[\im \phi \subset Q \cap L, \quad \phi((Q + L) / L) = 0.\]
        For a general point $L,$ the dimension of $L \cap Q$ is one, so the conormal vector has rank $1.$
	\end{example}

	The conormal rank can be computed more generally for any Schubert variety. Fix positive integers $k \leq n$ and a partition
$\lambda = (\lambda_1, \ldots,\lambda_k)$ which fits in a
$k \times (n-k)$ box. (Here we identify $\lambda$ with its Ferrers diagram, which has $\lambda_i$ boxes in row $i$). For $i \in [1,n],$ let $E_i$ be the subspace of $\bC^n$ spanned by the
last $i$ coordinate vectors. Fix the reference flag $E_1 \subset \cdots \subset E_n$. We define the \emph{Schubert variety}
$\Omega_\lambda$ to be
\begin{equation}\label{eq:schubert def}
\Omega_\lambda
=
\left\{
L \in \operatorname{Gr}(k,n)
:
\dim L \cap E_{n-k+i-\lambda_i} \geq i
\text{ for } i \in [1,k]
\right\}.
\end{equation}
	\begin{proposition}\label{prop:schubert rank}
		Fix $n,k$ with $k \leq n$, and a partition
$\lambda \subset k \times (n-k).$ The conormal rank of the Schubert variety $\Omega_\lambda$ is \[
\operatorname{crk}(\Omega_\lambda)
=
\max\left\{
\# S :
S\subseteq \lambda,\
\text{no two boxes of }S\text{ lie in the same row or column}
\right\}.
\]
	\end{proposition}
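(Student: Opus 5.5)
By Lemma \ref{lem:crk open}, it suffices to pick a general (equivalently, any generic smooth) point $L$ of $\Omega_\lambda$, describe the conormal space $N^*_{\Omega_\lambda}\Gr(k,n)_L \subset \Hom(\bC^n/L, L)$ explicitly, and compute the maximal rank of an element of it.

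\textbf{Choice of point and coordinates.} I would take $L$ to be the torus-fixed point of the open Schubert cell. This is the coordinate subspace spanned by $e_{j_1},\dots,e_{j_k}$ with $j_i = n-k+i-\lambda_i$ counted inside the last-coordinate flag. I would then use the open cell chart around it. The tangent space $T_L\Gr(k,n)=\Hom(L,\bC^n/L)$ has the coordinate basis $e_{j}^*\otimes e_m$ for $j\in J$, $m\notin J$. In the standard identification, these pairs $(j,m)$ correspond to the boxes of the $k\times(n-k)$ rectangle.

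\textbf{Computing the tangent space.} Since $\Omega_\lambda$ is $B$-stable and $L$ lies in the open $B$-orbit of $\Omega_\lambda$, $L$ is a smooth point. Its tangent space is the span of the $T$-weight vectors $e_j^*\otimes e_m$ for which the corresponding root subgroup of $B$ moves $L$. These are exactly the pairs with $m$ ``later'' than $j$ in the flag order, i.e.\ $e_j$ can be moved toward $e_m$ while staying inside the incidence conditions. Counting confirms that these form the complement of $\lambda$ in the rectangle (or of its $180^\circ$ rotation, depending on convention): the dimension is $k(n-k)-|\lambda|$, matching the codimension $|\lambda|$ of $\Omega_\lambda$. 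Consequently the conormal space, under the trace pairing, is the coordinate subspace of $\Hom(\bC^n/L,L)$ spanned by the dual matrix units $e_m^*\otimes e_j$ indexed by the boxes of $\lambda$. Identifying rows with $j\in J$ and columns with $m\notin J$ realizes $\lambda$ as a Ferrers-shaped set of matrix positions.

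\textbf{Maximal rank of a matrix with that support.} The remaining claim is combinatorial: the maximal rank of a $k\times(n-k)$ matrix whose support is contained in a set $S_0$ of positions equals the maximum size of a set of positions in $S_0$ with no two in the same row or column. One inequality is clear. Putting $1$'s on such a set gives a partial permutation matrix of that rank. For the other, any nonzero $r\times r$ minor has a nonzero term in its Leibniz expansion, and that term is a set of $r$ positions in $S_0$ with distinct rows and columns. (Equivalently, the generic rank equals the term rank, by K\"onig's theorem.) This gives exactly the formula in Proposition \ref{prop:schubert rank}, since the maximal rank is attained on an open set and Lemma \ref{lem:crk open} identifies it with $\crk(\Omega_\lambda)$.

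\textbf{Main obstacle.} The step I expect to be hardest is matching the tangent weights with the complement of $\lambda$ under the paper's convention \eqref{eq:schubert def}. This requires care with the flag indexing and with whether the shape appearing is $\lambda$ or its rotation. For a Ferrers shape the rotation does not change the maximal non-attacking set, so the final answer is insensitive to this. I would verify the matching directly: a root vector $e_j^*\otimes e_m$ is tangent iff replacing $e_j$ by $e_j+te_m$ preserves $\dim L\cap E_{n-k+i-\lambda_i}\ge i$ for all $i$.
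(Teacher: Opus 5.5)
Your proposal is correct and follows essentially the paper's route. In both, the conormal space at a point of the open cell is the coordinate subspace of $k\times(n-k)$ matrices supported on the Ferrers diagram of $\lambda$, and the answer is the term rank of that support; you justify the term-rank step via the Leibniz expansion, which the paper leaves implicit. One small addition is needed: your torus-fixed point is not a general point of $\Omega_\lambda$, so you should say explicitly that $B$ acts transitively on the open cell and on cotangent fibers by $\phi\mapsto g\circ\phi\circ g^{-1}$, which preserves both $N^*_{\Omega_\lambda}\Gr(k,n)$ and ranks. Alternatively, as the paper does, work in the echelon chart, where the whole cell is a coordinate linear subspace and so has the same conormal space at every point.
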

	\begin{proof}
	By Lemma \ref{lem:crk open}, we may compute the conormal rank on open sets. Let $\Omega^\circ_\lambda$ be the open Schubert cell where the rank inequalities in \eqref{eq:schubert def} are equalities. A point in $\Omega^\circ_\lambda$ can be represented uniquely by taking the reduced row echelon form of any matrix representative $A.$ The pivots of any such point will lie in the same set $(i_1, \ldots, i_k),$ allowing us to work in the affine chart where the Pl\"ucker coordinate $p_{i_1 \cdots i_k}$ is nonzero. The pattern of zero entries in the non-pivot columns is precisely the Ferrers diagram of $\lambda$ (in the French notation, with the largest part at the bottom). Thus in this open cell, the covectors are given by $k \times (n-k)$ matrices with support in $\lambda.$ The maximum rank of such a matrix equals the maximum in the statement.
	\end{proof}

	We may also compute the conormal rank of torus orbit closures via explicit parameterization. The torus $(\bC^*)^n$ acts on the Grassmannian $\Gr(k,n)$ by scaling the columns of a $k \times n$ matrix representative of each point, or equivalently on the Pl\"ucker coordinates by
	\[t \cdot p_{i_1 \ldots i_k} := t_{i_1} \cdots t_{i_k}p_{i_1 \ldots i_k}.\]
	For a point $L \in \Gr(k,n)$ let $\cV_L$ denote the torus orbit closure $\overline{(\bC^*)^n \cdot L}.$ The torus has a stabilizer $(t, \, \ldots, \, t),$ so for a general point $L$ the variety $\cV_L$ has dimension $n-1.$

	\begin{proposition}\label{prop:torus orbit rank}
		Fix positive integers $k, n$ with $k \geq 2$ and $2k < n.$ Choose a point $L \in \Gr(k,n)$ whose Pl\"ucker coordinates are nonzero. Then $\crk \cV_L = k.$
	\end{proposition}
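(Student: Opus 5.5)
The upper bound is immediate: a covector at $L$ is a homomorphism $\bC^n/L \to L$, so its rank is at most $\min(k,n-k)=k$, since $2k<n$. So the whole content is to exhibit a rank-$k$ covector. Because $\cV_L$ is the closure of a single orbit, every point of the orbit is regular and all orbit points are equivalent under the torus. It therefore suffices, by Lemma~\ref{lem:crk open}, to find a rank-$k$ element of the conormal fiber at $L$ itself.

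\textbf{Step 1: coordinates.} Since $p_{1\cdots k}(L)\neq 0$, write $L$ as the rowspan of $A=[\,I_k \mid A'\,]$ with $A'$ of size $k\times(n-k)$. Every entry of $A'$ is, up to sign, a Pl\"ucker coordinate of $L$, so all entries of $A'$ are nonzero. In the chart, tangent vectors are $k\times(n-k)$ matrices $X$, and covectors are $k\times(n-k)$ matrices $W$ paired by $\mathrm{tr}(W^TX)$; the rank of a covector is the rank of $W$. The Lie algebra of the torus acts by $t\mapsto \mathrm{diag}(t_1,\dots,t_k)^{-1}$-conjugation, which in the chart gives the tangent vectors
\[
X_t=\bigl(A'_{im}(t_{k+m}-t_i)\bigr)_{i,m}.
\]
The non-degeneracy of $A'$ makes the orbit map have stabilizer exactly the diagonal $\bC^*$. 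Hence $\dim\cV_L=n-1$, and the tangent space is spanned by the $X_t$.

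\textbf{Step 2: the conormal fiber.} Pairing $W$ with $X_t$ for each coordinate vector $t=e_j$ shows that $W$ is conormal iff the Hadamard product $W\circ A'$ has all row sums and all column sums equal to zero. Substituting $W=(A')^{\circ -1}\circ Z$, with $Z_{im}=W_{im}A'_{im}$, the conormal fiber is identified with the space of $k\times(n-k)$ matrices $Z$ having zero row and column sums. This space has dimension $(k-1)(n-k-1)=k(n-k)-(n-1)$, as it should. So we need one $Z$ of this form for which $(1/A'_{im})\,Z_{im}$ has rank $k$.

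\textbf{Step 3: constructing a rank-$k$ element (main obstacle).} Rank $k$ is an open condition on this linear space, so one good $Z$ suffices. First I would take a cyclic pattern supported on a $k\times k$ block of columns $m_1,\dots,m_k$: set $Z_{i,m_i}=1$ and $Z_{i,m_{i+1}}=-1$, with indices mod $k$. This has zero row and column sums, and the resulting $W$ restricted to that block has determinant
\[
\prod_i \frac{1}{A'_{i m_i}} \;\pm\; \prod_i \frac{1}{A'_{i m_{i+1}}},
\]
which may vanish. This is exactly where $n>2k$ is used: there are at least $k+1$ columns available. I can therefore replace one column of the block by the extra column, or take a linear combination of two such cycle patterns, which still has zero row and column sums. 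I would then show that the relevant $k\times k$ minor is a nonzero polynomial in the combination parameters unless a product identity among the entries of $A'$ holds for all choices of block. Such an identity would force certain $2\times2$ minors of $A'$, and hence certain Pl\"ucker coordinates of $L$, to vanish, contradicting the hypothesis. Making this last contradiction precise, that is, checking which Pl\"ucker coordinates are forced to vanish, is the step I expect to require the most care; if it becomes messy, I would fall back on inducting on $k$ by deleting a row and column of $A'$.
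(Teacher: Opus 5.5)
Your setup is the same as the paper's: the chart $[I_k\mid A']$, the conormal fiber as the matrices $W$ with $W\circ A'$ having zero row and column sums (this is exactly the paper's system for its covector $C$), and an extra column supplied by $n>2k$. However, the one substantive step, exhibiting a rank-$k$ covector, is left as a plan with three alternatives, so as written the lower bound $\crk \cV_L \geq k$ is not yet proved.

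Your first alternative closes it in a few lines. Take $k$-cycles on $m=(a,m_2,\dots,m_k)$ and $m'=(b,m_2,\dots,m_k)$ with $b\notin\{a,m_2,\dots,m_k\}$; such a $b$ exists since $n-k\geq k+1$. The corresponding block of $W$ is singular exactly when $\prod_i A'_{i m_i}=\prod_i A'_{i m_{i+1}}$, with indices mod $k$. The two identities coming from $m$ and $m'$ agree except that $A'_{1a}$ (on the left) and $A'_{ka}$ (on the right) are replaced by $A'_{1b}$ and $A'_{kb}$. If both held, dividing them would give $A'_{1a}A'_{kb}=A'_{1b}A'_{ka}$. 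That says the $2\times 2$ minor of $A'$ in rows $\{1,k\}$ and columns $\{a,b\}$ vanishes. Up to sign, this minor is the Pl\"ucker coordinate of $L$ indexed by $([k]\setminus\{1,k\})\cup\{k+a,k+b\}$, which contradicts the hypothesis. Hence one of the two cycle covectors has rank $k$, and no linear combinations or induction are needed. For $k=2$ a single cycle already suffices, since the binomial is itself a $2\times 2$ minor.

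The paper avoids this case split by choosing a different pattern. In your notation it sets:
\begin{itemize}
\item $Z_{ii}=1$ for $i<k$ and $Z_{1k}=1$;
\item $Z_{kj}=-1$ for $j\leq k$;
\item column $k+1$ of $Z$ equal to $(-2,-1,\dots,-1,k)^T$;
\item all other entries zero.
\end{itemize}
The first $k$ columns of the resulting $W$ form a block-triangular matrix. Its determinant is $\prod_{i=2}^{k-1}(A'_{ii})^{-1}$ times $\bigl(A'_{11}A'_{kk}-A'_{1k}A'_{k1}\bigr)/\bigl(A'_{11}A'_{1k}A'_{k1}A'_{kk}\bigr)$. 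This is a nonzero multiple of a single minor, so the rank is read off directly. There the extra column only balances the row sums. You instead use it to produce a second cycle, because for $k\geq 3$ your cycle's determinant is a binomial rather than a minor. Both arguments work: the paper's gives one explicit covector, while yours shows that one of two candidates succeeds.
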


	\begin{proof}
	A general point $L$ in $\Gr(k,n)$ may be represented as the rowspan of $[I_k : A]$ where $A$ is a $k \times (n-k)$ matrix. Then the torus action in this chart is given by scaling the rows and columns of $A.$ Parametrically, the torus action on $A$ is given by 
	\[(s_1, \ldots, s_k, t_1, \ldots, t_{n-k}) \cdot A = \begin{bmatrix}
	s_1t_1a_{1,1} & \hdots & s_1t_{n-k}a_{1,n-k} \\
	\vdots & \ddots & \\
	s_kt_1a_{k,1} & & s_kt_{n-k}a_{k,n-k}
	\end{bmatrix}.\]
	A matrix $C = (c_{ij})_{i \in [k], \, j \in [n-k]}$ is a covector at the point $(1, 1) \cdot A$ whenever it pairs to zero with each tangent vector $\frac{\partial}{\partial s_i}, \frac{\partial}{\partial t_j}$ at $A,$ or equivalently
	\begin{equation}\label{eq:torus covector}
	\sum_{i = 1}^k a_{ij} c_{ij} = 0 \quad \forall j \in [n-k], \qquad \sum_{j = 1}^{n-k} a_{ij} c_{ij} = 0 \quad \forall i \in [k].
	\end{equation}
	All minors of $A$ are maximal minors of $[I_k : A],$ hence nonzero by assumption. Define the $k \times (n-k)$ matrix $C$ by taking its first $k$ columns to be
$$\begin{pmatrix}
\frac1{a_{11}} & 0 & \cdots & 0 & \frac{1}{a_{1k}}\\[2pt]
0 & \frac1{a_{22}} & & & 0\\
\vdots & &\ddots & &\vdots\\[2pt]
0 & & &\frac1{a_{k-1,k-1}} & 0\\[2pt]
-\frac1{a_{k1}} & -\frac1{a_{k2}} &\cdots & -\frac{1}{a_{k,k-1}} & -\frac1{a_{kk}}
\end{pmatrix},$$
the $(k+1)$st column to be $\big(-\tfrac{2}{a_{1,k+1}},\,-\tfrac1{a_{2,k+1}},\,\dots,\,-\tfrac1{a_{k-1,k+1}},\,\tfrac{k}{a_{k,k+1}}\big)^{T}$, and remaining entries zero. Then $C$ satisfies \ref{eq:torus covector} and has rank $k,$ so $\cV_L$ has rank $k.$
	\end{proof}

	It would be interesting to understand ranks of torus orbit closures of non-general points.



	\section{Principal Chow--Lam loci}\label{sec:principal cl}
	In this section we recall the definition of a principal Chow--Lam locus of a subvariety $\cV$ in a Grassmannian, originally given in \cite{ChowLam}. It captures tangency between $\cV$ and certain subGrassmannians. We first introduce some notation for the subGrassmannians.

	\begin{enumerate}
		\item[(i)] Fix an $\ell$-dimensional linear space $Q \subset \bC^n$ and an integer $k \leq \ell < n$.
		The set
		\[
		\Gr(k,Q) \; :=\; \{\, V \in \Gr(k,n) \mid V \subseteq Q \,\}
		\]
		of $k$-spaces contained in $Q$ is a Schubert variety, linearly isomorphic to
		\(
		\Gr(k, \ell).
		\)

		\item[(ii)] Fix a $k$-dimensional linear space $L \subset \bC^n$, and an integer $k \leq \ell < n$.
		The set
		\[
		\Gr(L,\ell) \; :=\; \{\, V \in \Gr(\ell,n) \mid L \subseteq V \,\}
		\]
		of $\ell$-spaces containing $L$ is a Schubert variety, linearly isomorphic to
		\(
		\Gr(\ell - k, n - k).
		\)
	\end{enumerate}

The principal Chow--Lam locus takes as input a fixed subvariety $\cV$ of a Grassmannian and then varies a linear space $Q$ of fixed dimension, asking when $\Gr(k,Q)$ is tangent to $\cV.$ We recall the definition from the introduction, with the precise indexing, as follows.
\begin{repdefinition}{def:principal cl}
Let $\cV\subset\Gr(k,n)$ be an irreducible variety of dimension
$k(n-\ell)-1+i$, where $k< \ell < n$ and $0\leq i\leq k(\ell - k).$ Define the incidence variety
\[
\Phi_{\cV}^i
=
\overline{
\left\{
(L,Q):
L\in\cV_{\rm reg}
\text{ and }
\Gr(k,Q)\cap\cV
\text{ is not transverse at }L
\right\}} \, \subset \, \Fl(k,\ell,n).
\]
The $i$th \emph{principal Chow--Lam locus} $\mathcal{PCL}_{\cV}^i$ is the projection of $\Phi_{\cV}^i$ to $\Gr(\ell,n).$ If $\mathcal{PCL}_{\cV}^i$ is a hypersurface, its equation is
called the $i$th \emph{principal Chow--Lam form} and denoted ${\rm PCL}^i_\cV$.
\end{repdefinition}

\begin{remark}\label{rmk:principal-vs-higher}
Principal Chow--Lam forms were called higher Chow--Lam forms and denoted ${\mathcal CL}^i_\cV$ in \cite{ChowLam}. We prefer to reserve the name Chow--Lam discriminant and notation $\mathcal{CL}^i_\cV$ for the irreducible polynomial we define in the following section, which will be a factor of the principal Chow--Lam form. For the ordinary Chow--Lam case, the two notions coincide when neither is $1$; see Remark \ref{rem:cl conventions}.
\end{remark}

The case $i = 0, \, k=1$ recovers the classical Chow form, introduced by Chow and van der Waerden \cite{CvW} based on earlier work of Cayley for curves in projective three-space \cite{cayley}. More recently, they have appeared in the construction of Chow quotients by Kapranov \cite{kapranov}.

More generally, the case $i = 0$ is called the \emph{Chow--Lam locus}. It describes subGrassmannians of the form $\Gr(k,Q)$ whose dimension equals $\codim  \cV - 1$ and which meet the variety $\cV.$ For concreteness we include the definition.
\begin{definition}
		Consider  $\mathcal{V} \subset \Gr(k,n)$,
		where $\dim (\mathcal{V}) = k(n-\ell)-1$ for some $k < \ell < n$. The \emph{Chow--Lam locus} is
		\[\mathcal{CL}_\cV := \overline{\{Q \ : \ \Gr(k,Q) \cap \cV \neq \varnothing\}} \ \subset \ \Gr(\ell,n).\]
		The codimension of $\mathcal{CL}_\mathcal{V}$
		in $\Gr(\ell,n)$ is expected to be one.
		If it is one, then
		$\mathcal{CL}_\mathcal{V}$    is defined by a homogeneous
		polynomial  in Pl\"ucker coordinates, which is unique modulo Pl\"ucker relations.
		This polynomial is denoted
		${\rm CL}_\mathcal{V}$ and called the
		{\em Chow--Lam form} of $\mathcal{V}$.
	\end{definition}

The case $k=1$ recovers the \emph{associated hypersurfaces} introduced by Gelfand, Kapranov, and Zelevinsky \cite{gkz}. 
One major difference between the $k=1$ case and the more general Grassmannian case is that for $k>1$, there is a dimension restriction on the input variety~$\cV$. This restriction is designed so that we expect principal Chow--Lam loci to be hypersurfaces. For example, a subvariety $\cV \subset \Gr(2,4)$ will have a principal Chow--Lam form for $i = 0$ if it is a curve, for $i=1$ if it is a surface, and for $i = 2$ if it is a threefold. For each of these cases, $\ell = 3$ and the principal Chow--Lam locus lies in $\Gr(3,4).$

	\begin{example}[Chow--Lam form of a curve]\label{eg:clcurve}
		Let $\cV$ be a curve in $\Gr(2,4).$ Define
		\[\Phi_\cV := \{(L, P) \ : \ L \subset P \text{ and $L$ is in } \cV_{\rm reg}\} \ \subset\  \cV \times \Gr(3,4).\]
		Then the projection to $\Gr(3,4) \cong (\bP^3)$ is a surface, and its defining equation is the \emph{Chow--Lam form} of the curve. Indeed, let $X_\cV$ be the surface in $\bP^3$ ruled by the curve $\cV.$ Then the Chow--Lam locus equals the dual of $X_\cV$ whenever the latter is a hypersurface.
	\end{example}

	\begin{example}[Hurwitz--Lam form of a surface]
		Let $\cV$ be a surface in $\Gr(2,4),$ also called a \emph{line congruence}. Each projective plane $P$ in $\bP^3$ determines a Schubert variety $\Gr(2,P)$ of projective lines contained in $P.$ We define
		\[\Phi := \{(L,P) \ : \ \Gr(2,P) \text{ tangent to }\cV \text{ at $L$ in } \cV_{\rm reg}\} \ \subset \ \Gr(2,4) \times \Gr(3,4).\]
		The projection to $\Gr(3,4)$ is called the \emph{Hurwitz--Lam locus}. It parameterizes planes containing fewer than expected lines from the congruence.
	\end{example}

\begin{example}[Chow--Lam form of a torus orbit closure]
		Let $L$ be a general point of $\Gr(2,6)$, with Pl\"ucker coordinates $q_{ij}.$ In the case $k=2$ and $\ell = 3$, its torus orbit closure $\cV_L$ is a $5$-dimensional subvariety of $\Gr(2,6).$ The corresponding Chow--Lam locus lives in $\Gr(3,6)$ and parameterizes planes in $\bP^5$ containing a line $L'$, such that $L'$ is a point of $\cV_L.$ It is a hypersurface whose defining equation is
		\[\begin{matrix} {\rm CL}_\cV = (q_{12}q_{34}q_{56} \,+\, q_{14}q_{25}q_{36})\, p_{123} p_{456}
			\,-\,  q_{13} q_{25} q_{46} \, p_{124} p_{356} \\
			\,+\,  q_{12} q_{35} q_{46} \, p_{134} p_{256}
			\,-\,  q_{12} q_{34} q_{56} \, p_{135} p_{246}
			\,+\,  q_{13} q_{24} q_{56} \, p_{125} p_{346}.
		\end{matrix} \]
		One may check using a computer algebra system that in any affine chart, all of the conormal vectors have rank at most two. The Chow-Lam form here was computed in \cite[Example 4.3]{ChowLam}; more generally, the Chow--Lam form of a torus orbit closure is described in \cite{SegreDet}.
	\end{example}
It was shown in \cite{ChowLam} that the Chow--Lam form is irreducible. However, principal Chow--Lam forms are sometimes reducible for $i$ greater than zero.

 \begin{example}[A Schubert linear section] \label{eg:schubert reducible}
		Let $\cV$ be the positroid variety in $\Gr(2,6)$ given by $V(q_{12},q_{34},q_{56}).$ This is the transverse intersection of three Schubert divisors, given by lines intersecting the three-spaces $\overline{e_1e_2e_3e_4}, \, \overline{e_3e_4e_5e_6},$ and $\overline{e_1e_2e_5e_6}.$ The principal Chow--Lam locus $\mathcal{PCL}^2_\cV$ parameterizes projective three-spaces $Q$ in $\bP^5$ such that the curve $\Gr(2,Q) \cap \cV$ is singular. Indeed, it is given by $p_{1234}p_{3456}p_{1256} = 0$  \cite[Example 5.6]{ChowLam}.
 \end{example}

The following gives an example where the Hurwitz--Lam form is reducible.

 \begin{example}[The join of two conics]\label{eg:conics}
Let $A, B$ be three-dimensional complex vector spaces. Let $Q_1$ be a smooth conic in the projective plane $\bP A$ and let $Q_2$ be a smooth conic in $\bP B.$ We define the variety $\cV$ to be
\[
\cV
:=
\left\{
\overline{pq} \, \in\Gr(2,A\oplus B)
:
p\in Q_1,\ q\in Q_2
\right\}.
\]
Since $A\cap B=0$, the points $p$ and $q$ are recovered from
$L=\overline{pq} $ as $\bP(L\cap A)$ and $\bP(L\cap B)$.
Hence
\[
\cV\simeq Q_1\times Q_2\simeq\bP^1\times\bP^1.
\]

Choose coordinates so that
\[
Q_1=[s^2:st:t^2]\subset\bP A,
\qquad
Q_2=[u^2:uv:v^2]\subset\bP B.
\]
For a hyperplane
\[
H_y=V(y_1x_1+\cdots+y_6x_6)\subset\bP(A\oplus B),
\]
the condition $\overline{pq} \, \subset H_y$ becomes
\[
y_1s^2+y_2st+y_3t^2=0,
\qquad
y_4u^2+y_5uv+y_6v^2=0.
\]
The intersection of $H_y$ with $\cV$ fails to be transverse precisely when one of the
two binary quadratics has a double root. Thus the Hurwitz--Lam
locus has two irreducible components, and its defining equation is
\[
\mathrm{HL}_{\cV}
=
\left(y_2^2-4y_1y_3\right)
\left(y_5^2-4y_4y_6\right).
\]
The symmetry of swapping the two factors reflects the symmetry of swapping the two conics.
 \end{example}

\section{The Chow--Lam discriminant}\label{sec:cl discriminant}
A central philosophy in the theory of discriminants is that they should be made to be irreducible.  One restricts the incidence correspondence to a ``nice'' locus in the base variety $\cV$, then takes Zariski closure. This procedure removes components corresponding to less meaningful tangencies.

This philosophy appears in many contexts. For example, in the definition of a dual variety of a projective variety $\cV$, one restricts the incidence correspondence to the regular locus of $\cV$ and then takes Zariski closure. This avoids picking up extra components from hyperplanes passing through the singular locus of $\cV$.

Another example comes from the $A$-discriminant in toric geometry. Let $A \subset \bZ^m$ be a set of integer vectors generating $\bZ^m.$ The $A$-discriminant is the defining equation of the closure of the set
\[
\nabla_A := \left\{
f \in \mathbb{C}^A
\;\middle|\;
V\left(
f,
\frac{\partial f}{\partial x_1},
\ldots,
\frac{\partial f}{\partial x_{m}}
\right)
\neq \varnothing
\text{ in } (\mathbb{C}^*)^{m}
\right\}.
\]
Note that one allows repeated roots only in the dense torus, and then takes Zariski closure. There is another polynomial associated to $A$, called the principal $A$-determinant, which is easier to compute and contains the $A$-discriminant as an irreducible factor. For a thorough explanation of these, see \cite[Chapters 9--10]{gkz}. It is in this spirit that we chose the name ``principal Chow--Lam form'' for the polynomial in the previous section, which is reducible in general (cf. Example \ref{eg:schubert reducible}). 

In this section we introduce the Chow--Lam discriminant, which is an irreducible factor of the principal Chow--Lam form that records the important tangencies. We prove irreducibility in Theorem~\ref{thm:cldisc irreducible}. The lemmas required for that proof are reused in Sections \ref{sec:rank conditions} and \ref{sec:recovery}.

\vspace{0.3cm}

Fix integers $k < \ell < n$. Let $F := \mathrm{Fl}(k, \ell, n)$ be the partial flag variety parameterizing flags $V_k \subset V_{\ell} \subset \bC^n$ of  vector subspaces, where $V_k, V_\ell$ have dimensions $k, \ell,$ respectively. Let \[X:= \Gr(k,n) \times \Gr(\ell, n)\] be the product of the relevant Grassmannians, and let $p_1, p_2$ be the projections to each factor, as in Figure \ref{fig:flagprojections}.

\begin{figure}[!h]
		\begin{center}
\begin{minipage}{0.49\textwidth}
    \centering
    \begin{tikzcd}[column sep=tiny]
        & F = \Fl(k,\ell,n) \subset X
        \arrow{dl}[swap, dash]{p_1}
        \arrow{dr}[dash]{p_2} & \\
        \Gr(k,n) & & \Gr(\ell,n)
    \end{tikzcd}
\end{minipage}%
\hfill
\begin{minipage}{0.49\textwidth}
    \centering
    \begin{tikzcd}[column sep=tiny]
        & N^*_{F}X
        \arrow{dl}[swap, dash]{p_1}
        \arrow{dr}[dash]{p_2} & \\
        T^*\Gr(k,n) & & T^*\Gr(\ell,n)
    \end{tikzcd}
\end{minipage}
\end{center}
		\caption{Maps from the partial flag variety and its conormal bundle}\label{fig:flagprojections}
	\end{figure}

Let $N^*_FX$ denote the conormal bundle of the flag variety in $X$. Then we have the maps on the right side of Figure \ref{fig:flagprojections}, where we re-use the notation for the induced projections to the cotangent bundles $T^*\Gr(k,n)$ and $T^*\Gr(\ell, n)$. We define $D^\circ_r := D_r \setminus D_{r-1}$ to be the (open) conormal rank $r$ locus in the cotangent bundle $T^*\Gr(k,n).$

\begin{notation}
For $\cV$ a subvariety of $\Gr(k,n),$ it is convenient to fix the notation 
\[\Con(\cV) := \Con_\cV \Gr(k,n),\]
when the ambient Grassmannian is clear from context. We will also use the notation 
\[
\Con(\cV)^\circ_k
:=
N^*_{\cV}\Gr(k,n) \cap D_k^\circ.
\]
This is the subset of the conormal bundle consisting of covectors of rank exactly $k.$ 
\end{notation}

Note that since the dropping of rank is a closed condition, and there are no covectors of rank more than $k$, the set $\Con(\cY)^\circ_k$ is either empty or Zariski open in $\Con(\cY).$

\begin{definition}[Chow--Lam discriminant]\label{def:cl discriminant}
	Let $\cV \subset \Gr(k,n)$ be an irreducible variety of dimension $k(n-\ell)-1 + i,$ where $k < \ell \leq n-k$ and $0 \leq i \leq k(\ell - k).$ Define the incidence variety
	\[\Psi^i_\cV = \overline{\{(L, \phi) \times (Q, \eta) \in N^*_FX \ : \ (L, \phi) \in \Con(\cV)^\circ_k\}}.\]
	If the projection to $\Gr(\ell, n)$ is a hypersurface, we call its defining equation the \emph{Chow--Lam discriminant} of $\cV$ and $i$ and denote it $\mathrm{CL}^i_\cV.$ Else, we set  $\mathrm{CL}^i_\cV := 1.$
\end{definition}

Note the upper bound $\ell \leq n-k$ in Definition \ref{def:cl discriminant}, compared to $\ell < n$ in Definition \ref{def:principal cl}. In particular the bound implies that $k < n - k.$ When these bounds are violated, the incidence variety is empty and the Chow--Lam discriminant is $1.$ However, this does not impose a large restriction on $k$ and $n$; if $k$ is greater than $n-k,$ then one may apply the map which takes each subspace $L$ to its orthogonal complement $\L^\perp \subset \bC^n,$ and work with $\Gr(n-k,n)$ instead. Thus the only truly problematic choice is $n = 2k.$

\begin{remark}
The incidence variety $\Psi^i_\cV$ is unchanged if one replaces $\Con(\cV)^\circ_k = N^*_\cV \Gr(k,n) \cap D^\circ_k$ with $\Con(\cV) \cap D^\circ_k.$ Indeed, by Lemma \ref{lem:con vs bundle}, we have that $N^*_\cV \Gr(k,n)$ is dense and open in $\Con(\cV).$ Thus their intersections with $D^\circ_k$ are either both empty or both dense. In the latter case, since $p_1$ restricts to a fiber bundle, the pre-images in $\Psi^i_\cV$ are dense.
\end{remark}

\begin{remark}\label{rem:cl conventions}
In the $i=0$ case the Chow--Lam form is irreducible \cite[Lemma 3.2]{ChowLam}, and coincides with the Chow--Lam discriminant when the latter is not $1$; see Theorem \ref{thm:cldisc irreducible}. The sources \cite{ChowLam,PrattRanestad,SegreDet} use the notation ${\mathcal CL}_\cV$ for the Chow-Lam form, which agrees with ${\mathcal CL}^0_\cV$ whenever the latter is defined, so there is no major notational clash.
\end{remark}

 To prove that this discriminant is irreducible, we will need to understand the conormal bundle of the flag variety concretely. Let $\cS_k$ denote the pullback to $X$ of the universal subbundle on
$\Gr(k,n)$, and let $\cQ_\ell$ denote the pullback to $X$ of the
universal quotient bundle on $\Gr(\ell,n)$. Set
$
E:=\sHom(\cS_k,\cQ_\ell).$
The fibers of $E$ and $E^*$ are given concretely by
\[
E_{(L,Q)}=\Hom(L,\bC^n/Q), \quad E^*_{(L,Q)}=\Hom(\bC^n/Q, L).
\]
\begin{lemma}\label{lem:conormal}
There is a natural isomorphism
\[
N_F^*X\cong E^*|_F.
\]
\end{lemma}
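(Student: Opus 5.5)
The plan is to identify the normal bundle $N_FX$ with $E|_F$ and then dualize. The identification of $N^*_FX$ with $E^*|_F$ is then the dual of the conormal exact sequence, where the second pairing is the trace pairing used to define conormal rank.

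First I recall the tangent bundles. At a point $(L,Q)\in X$ we have
\[
T_{(L,Q)}X=\Hom(L,\bC^n/L)\oplus\Hom(Q,\bC^n/Q).
\]
Globally this is $\sHom(\cS_k,\cQ_k)\oplus\sHom(\cS_\ell,\cQ_\ell)$, with the obvious notation for the pulled-back universal bundles. On $F$ we have the tautological inclusion $\cS_k\subset\cS_\ell$ and the quotient map $\cQ_k\to\cQ_\ell$. I define a bundle map $\Theta: TX|_F\to E|_F$ by
\[
\Theta(\alpha,\beta)=\bar\alpha-\beta|_L ,
\]
where $\bar\alpha$ is $\alpha$ composed with the projection $\bC^n/L\to\bC^n/Q$, and $\beta|_L$ is the restriction of $\beta$ to $L\subset Q$. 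This map is natural, hence $GL_n$-equivariant and algebraic.

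Next I check that $\ker\Theta=TF$. A tangent vector to $F$ is the first-order deformation $(L_\epsilon,Q_\epsilon)$ keeping $L_\epsilon\subset Q_\epsilon$. Write it in a splitting: $L_\epsilon=\{x+\epsilon\alpha(x)\}$ and $Q_\epsilon=\{y+\epsilon\beta(y)\}$. The containment condition to first order says exactly that $\alpha(x)\equiv\beta(x)$ modulo $Q$ for all $x\in L$, i.e. $\Theta(\alpha,\beta)=0$. So $TF\subseteq\ker\Theta$. For equality I use a surjectivity-plus-dimension count. $\Theta$ is surjective, since already $\beta\mapsto-\beta|_L$ surjects onto $\Hom(L,\bC^n/Q)$ (extend a map from $L$ arbitrarily to $Q$). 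Then the rank of $\ker\Theta$ is
\[
\dim X-k(n-\ell)=k(n-k)+\ell(n-\ell)-k(n-\ell).
\]
This equals $\dim F=k(\ell-k)+\ell(n-\ell)$, so $\ker\Theta=TF$. This yields the exact sequence
\[
0\to TF\to TX|_F\to E|_F\to 0,
\]
so $N_FX\cong E|_F$.

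Finally I dualize to get $N^*_FX=(N_FX)^*\cong E^*|_F$. Under the trace pairing, a covector $\eta\in\Hom(\bC^n/Q,L)$ is sent to the pair $(\phi,\psi)\in\Hom(\bC^n/L,L)\oplus\Hom(\bC^n/Q,Q)$ given by $\phi=\eta\circ(\bC^n/L\to\bC^n/Q)$ and $\psi=-(L\hookrightarrow Q)\circ\eta$. Recording this explicit form is worth doing, since it is what gets used later: it shows $p_1$ and $p_2$ send $\eta$ to homomorphisms of the same rank as $\eta$.

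The main obstacle is the kernel identification. The rest is formal, and the dimension count avoids a careful local-coordinate verification, provided surjectivity is checked fiberwise.
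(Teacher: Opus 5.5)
Your proposal is correct and is essentially the paper's proof. Your $\Theta(\alpha,\beta)=p\circ\alpha-\beta|_L$ is exactly the differential $ds$ of the tautological section $s$ of $E$ induced by $\cS_k\hookrightarrow\bC^n\otimes\cO_X\to\cQ_\ell$, and both arguments show this map is surjective, obtain $0\to T_F\to T_X|_F\to E|_F\to 0$, and dualize. The only difference is how $\ker\Theta=T_F$ is established: the paper uses that $F$ is the transverse zero scheme of $s$, which also proves $F$ is smooth, whereas you check $T_F\subseteq\ker\Theta$ by a first-order deformation and then use a dimension count that relies on the known smoothness and dimension of $F$. Your explicit dual formula with $\psi=-(L\hookrightarrow Q)\circ\eta$ is also correct; the paper drops this sign in its description of $p_2$, which is harmless for all of its rank and image statements.
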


\begin{proof}
Let $s$ be the global section of $E$ induced by
\[
\cS_k \hookrightarrow \bC^n \otimes \cO_X
\longrightarrow \cQ_\ell.
\]

At each fiber $(L,Q)$ the section $s$ is given by the composition
$L\hookrightarrow\bC^n\twoheadrightarrow\bC^n/Q$ in $\Hom(L, \bC^n/Q).$
Thus the zero locus of $s$ is precisely $F$.
Let $p:\bC^n/L\twoheadrightarrow\bC^n/Q$ denote the induced projection map. Under the identifications
$T_{(L,Q)}X=\Hom(L,\bC^n/L)\oplus\Hom(Q,\bC^n/Q)$ and
$E_{(L,Q)}=\Hom(L,\bC^n/Q)$ we have
\[ds(\alpha,\beta)=p\circ\alpha-\beta|_L \quad \in E_{(L, Q)}.\] The differential $ds$ is surjective at every point of $F$ (taking $\beta = 0$ and varying $\alpha$ gives all points in the target). Thus $s$ is a transverse section of $E$ and $F$ is its smooth zero scheme, and $ds$
fits in a short exact sequence
\[
0\longrightarrow T_F\longrightarrow T_X|_F\xrightarrow{ds} E|_F\longrightarrow 0 .
\]
Dualizing and comparing with the conormal sequence yields $N_F^*X\cong E^*|_F$.
\end{proof}

The two projections in the conormal correspondence of $
u:\bC^n/Q\longrightarrow L$ are therefore:
\begin{equation}\label{eq:conormal projections}
p_1(u):
\bC^n/L\twoheadrightarrow\bC^n/Q
\xrightarrow{u}L, \quad \quad p_2(u):\bC^n/Q\xrightarrow{u}L\hookrightarrow Q.
\end{equation}

\begin{theorem}\label{thm:cldisc irreducible}
Let $\cV \subset \Gr(k,n)$ be an irreducible variety of dimension $k(n-\ell)-1 + i,$ where $k < \ell \leq n-k$ and $0 \leq i \leq k(\ell - k).$ The Chow--Lam discriminant $\mathrm{CL}^i_\cV$ is an irreducible factor of the principal Chow--Lam form $\mathrm{PCL}^i_\cV$.
\end{theorem}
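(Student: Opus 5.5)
We need to show two things: the projection of $\Psi^i_\cV$ to $\Gr(\ell,n)$ is irreducible, and when it is a hypersurface it is contained in (hence is a component of) $\mathcal{PCL}^i_\cV$. The plan is to realize $\Psi^i_\cV$ as the closure of a vector-bundle-like fibration over the irreducible variety $\Con(\cV)^\circ_k$. After that, irreducibility follows from general facts, and containment follows from a linear-algebra description of the fibers using Lemma \ref{lem:conormal} and the projection formulas \eqref{eq:conormal projections}.

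\textbf{Step 1: the fibers of $p_1$ over $D^\circ_k$.} Using Lemma \ref{lem:conormal}, a point of $N^*_FX$ over $(L,Q)$ is $u\in\Hom(\bC^n/Q,L)$. By \eqref{eq:conormal projections}, $p_1(u)$ is the composite $\bC^n/L\twoheadrightarrow\bC^n/Q\xrightarrow{u}L$.

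Fix $(L,\phi)$ with $\phi\in\Hom(\bC^n/L,L)$ of rank exactly $k$, i.e. $\phi$ surjective. The pairs $(Q,u)$ with $p_1(u)=\phi$ are exactly the $\ell$-spaces $Q$ satisfying
\[
L\subseteq Q\subseteq \widetilde{\ker\phi},
\]
where $\widetilde{\ker\phi}$ denotes the preimage of $\ker\phi$ in $\bC^n$. For such $Q$, the map $u$ is then uniquely determined as the map induced by $\phi$. Since $\phi$ is surjective, $\dim\widetilde{\ker\phi}=n-k$, so these $Q$ form $\Gr(\ell-k,\widetilde{\ker\phi}/L)\cong\Gr(\ell-k,n-2k)$. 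This is nonempty precisely because $\ell\le n-k$.

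It follows that $p_1^{-1}(\Con(\cV)^\circ_k)\to\Con(\cV)^\circ_k$ is a Grassmannian bundle: it is locally trivial and comes from the rank $n-2k$ bundle $\widetilde{\ker\phi}/L$.

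\textbf{Step 2: irreducibility.} The set $\Con(\cV)^\circ_k$ is open in the irreducible variety $\Con(\cV)$ (Lagrangian, hence irreducible), so it is irreducible when nonempty. A Grassmannian bundle with irreducible fibers over an irreducible base is irreducible. Hence $\Psi^i_\cV$ is irreducible, and so is its image in $T^*\Gr(\ell,n)$ and in $\Gr(\ell,n)$. The image in $\Gr(\ell,n)$ is closed because the relevant maps are conic and we may projectivize the fibers to obtain properness. If $\Con(\cV)^\circ_k$ is empty, or the image is not a hypersurface, then $\mathrm{CL}^i_\cV=1$ and the statement is trivial. Otherwise the image is an irreducible hypersurface, so $\mathrm{CL}^i_\cV$ is irreducible.

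\textbf{Step 3: containment in $\mathcal{PCL}^i_\cV$.} Take a general point of $\Psi^i_\cV$: $L\in\cV_{\rm reg}$, $0\ne\phi\in N^*_{\cV,L}$, and $L\subset Q\subseteq\widetilde{\ker\phi}$. The tangent space $T_L\Gr(k,Q)=\Hom(L,Q/L)$. The covector $\phi$ pairs with $\alpha\in\Hom(L,Q/L)$ via $\mathrm{tr}(\phi\circ\alpha)$, and $\phi$ kills $Q/L$, so this pairing is zero. Thus the nonzero conormal vector $\phi$ annihilates both $T_L\cV$ and $T_L\Gr(k,Q)$. Therefore $T_L\cV+T_L\Gr(k,Q)\ne T_L\Gr(k,n)$, meaning the intersection $\Gr(k,Q)\cap\cV$ is not transverse at $L$. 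So $(L,Q)\in\Phi^i_\cV$.

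Passing to closures, the projection of $\Psi^i_\cV$ is contained in $\mathcal{PCL}^i_\cV$. When both are hypersurfaces, the irreducible hypersurface $\mathcal{CL}^i_\cV$ is a component of $\mathcal{PCL}^i_\cV$, so $\mathrm{CL}^i_\cV$ divides $\mathrm{PCL}^i_\cV$.

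\textbf{Main obstacle.} The main obstacle I expect is the bookkeeping in Step 3 when $\mathcal{PCL}^i_\cV$ is not a hypersurface. In that case I would argue that the dimension count $\dim\Phi^i_\cV\le\dim\Gr(\ell,n)-1$ forces $\mathcal{PCL}^i_\cV$ to have codimension at least one. Then, if $\mathcal{CL}^i_\cV$ is a hypersurface, $\mathcal{PCL}^i_\cV$ is a hypersurface as well, since it contains $\mathcal{CL}^i_\cV$.

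A secondary technical point is justifying that the closure in $N^*_FX$ projects onto a closed subset. I would handle this by the conic-invariance and properness argument sketched in Step 2.
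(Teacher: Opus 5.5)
Your Steps 1--3 are the paper's proof, and they are sound. The paper uses the same fiber $p_1^{-1}(L,\phi)=\{(Q,u): L\subset Q,\ Q/L\subseteq\ker\phi\}\cong\Gr(\ell-k,n-2k)$ and the same fiber-bundle argument over $\Con(\cV)^\circ_k$. It also uses the same observation that $\phi(Q/L)=0$ makes $\phi$ annihilate $T_L\Gr(k,Q)$. Your closedness remark for the conic set $\Psi^i_\cV$ is also fine.

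\textbf{The gap.} The step that fails is your proposed handling of the ``main obstacle'': the bound $\dim\Phi^i_\cV\le\dim\Gr(\ell,n)-1$ is false in general, because low-rank covectors have large fibers. If $\phi$ is a rank-one conormal vector at $L\in\cV_{\rm reg}$, then every $Q\supset L$ with $Q/L\subseteq\ker\phi$ lies in $\Phi^i_\cV$, and these $Q$ form a $\Gr(\ell-k,n-k-1)$. So suppose $\cV$ has a rank-one conormal direction at a general point. This happens, for example, if $\cV$ lies in a Schubert divisor $\Omega(A)=\{L: L\cap A\neq 0\}$ with $\dim A=n-k$ and meets its smooth locus. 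Then $\dim\Phi^i_\cV\ge\dim\cV+(\ell-k)(n-\ell-1)$. Since $\dim\cV=k(n-\ell)-1+i$, this exceeds $\ell(n-\ell)-1$ by $i-(\ell-k)$.

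Concretely, take $k=2$, $n=6$, $\ell=4$ and $\cV=\Omega(A)\cap\cH'$ with $\cH'$ a general hyperplane section of $\Gr(2,6)$. Then $i=3$, and $\cV$ has conormal rank $2$, so this is not an artifact of $\mathrm{CL}^i_\cV=1$. The pairs with $L\subset Q\subset A+L$ form an $8$-dimensional family inside $\Phi^3_\cV$, whereas $\dim\Gr(4,6)-1=7$. Such pieces have positive-dimensional fibers over $\Gr(\ell,n)$. So no dimension count on $\Phi^i_\cV$ alone can rule out $\mathcal{PCL}^i_\cV=\Gr(\ell,n)$.

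\textbf{The fix.} What does work is Kleiman's transversality theorem. The group $GL_n$ acts transitively on $\Gr(k,n)$, and $\Gr(k,gQ_0)=g\cdot\Gr(k,Q_0)$. Hence for $g$ in a dense open subset of $GL_n$, this translate meets the smooth variety $\cV_{\rm reg}$ transversely at every point. Since the orbit map $g\mapsto gQ_0$ is open, the image of $\Phi^i_\cV$ misses a dense open subset of $\Gr(\ell,n)$. Therefore $\mathcal{PCL}^i_\cV$ is a proper closed subset. A hypersurface $\mathcal{CL}^i_\cV$ contained in it is then an irreducible component of its divisorial part, which gives the divisibility. The paper's proof asserts the component statement without addressing this point, so you were right to flag it; only your justification needs to be replaced.
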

\begin{proof}
Suppose that $\Con(\cV)^\circ_k$ is empty. Then the incidence variety is empty and $\CL^i_\cV = 1,$ so the statement is vacuously true.

Now suppose that $\Con(\cV)^\circ_k$ is nonempty. The bundle $N^*_\cV\Gr(k,n)$  is a vector bundle over the irreducible variety $\cV_{\rm reg}.$ Thus it is irreducible. Furthermore, the restriction of $p_1$ to $p_1^{-1}(D^\circ_k)$ is a fiber bundle with irreducible fiber. Indeed, its fiber above $(L, \phi)$ consists of pairs $(Q, u)$ where $\phi: \bC^n / L \to L$  factors through $u$ as in \eqref{eq:conormal projections}.
In symbols, we have
\begin{equation}\label{eq:cl fiber}
p_1^{-1}(L, \phi) = \{(Q, u) \, : \, L \subset Q, \ Q /L \subset \ker \phi\}.
\end{equation}
But since $\phi$ has rank $k,$ the fibers are all isomorphic to $\Gr(\ell-k, n-2k)$. Thus the incidence correspondence restricts to a fiber bundle above $\Con(\cV)^\circ_k.$ This restriction is irreducible and so is the Zariski closure $\Psi^i_\cV$. Therefore the projection to $\Gr(\ell, n)$ is irreducible.

To show $\CL^i_\cV$ divides ${\rm PCL}^i_\cV$, we note that $\phi$ annihilates $T_L\Gr(k, Q),$ since $\phi(Q/L) = 0.$ Thus $\Gr(k,Q)$ and $\cV$ intersect non-transversely, and the Chow--Lam discriminantal locus, if it is a hypersurface, is an irreducible component of the principal Chow--Lam locus.
\end{proof}

The Chow--Lam discriminantal locus given by projecting $\Psi^i_\cV$ is not always a hypersurface, and the Chow--Lam discriminant may equal $1,$ as in the following example.
\begin{example}\label{eg:cl curve join}
Let $\cV$ be the intersection of three Schubert divisors as in Example \ref{eg:schubert reducible}. Consider the change of coordinates $(e_1, e_2, e_3, e_4, e_5, e_6) \mapsto (e_3, e_4, e_5, e_6, e_1, e_2)$ on $\bC^6.$ The variety $\cV$ is invariant under the change of coordinates induced on the Grassmannian $\Gr(2, 6).$ Thus the Chow--Lam discriminant $\CL^2_\cV$ is invariant under the induced change of coordinates on $\Gr(4,6).$ But this permutes the irreducible (non-scalar) factors in the principal Chow--Lam form ${\rm PCL}^2_\cV$, so the Chow--Lam discriminant cannot equal any of the factors.
\end{example}

The following question would be interesting to address:
\begin{question}
For which varieties $\cV$ in $\Gr(k,n)$ is the Chow--Lam discriminant not $1$?
\end{question}
This is a large generalization of the classical question: for which projective varieties is the dual variety a hypersurface? There are criteria available for the $k=1$ case in terms of polar degrees of the variety $\cV,$ which are given by the class of the conormal variety; see e.g. \cite[Section 4]{kohn}. There is also a criterion for the $i=0$ case in terms of the cohomology class of $\cV$ \cite[Theorem 3.5]{ChowLam}. It would be interesting (though outside the scope of the present paper) to establish such criteria for subvarieties of general Grassmannians.

	\section{Rank conditions}\label{sec:rank conditions}

In this section we study conormal ranks of the hypersurfaces defined by principal Chow--Lam forms and Chow--Lam discriminants. Our main result is Theorem \ref{thm:rankk}, which states that any principal Chow--Lam locus of a
subvariety of $\Gr(k,n)$, when it is a hypersurface, has conormal rank bounded above by $k$. Furthermore, the Chow--Lam discriminant will define a hypersurface of conormal rank exactly $k$ (Theorem \ref{thm:cl rank}).

As in the previous section, let $p_1, p_2$ denote the projections from the bundle $E^*|_F \cong N^*_F X$ to the factors $T^*\Gr(k,n)$ and $T^*\Gr(\ell, n).$ We will need the following concrete description of the projections of the fibers.
\begin{lemma}\label{lem:eimage}
Let $F$ denote the flag variety $\mathrm{Fl}(k, \ell, n)$. For a flag $(L\subset Q) \in F$, we have
\[
p_1(E^*_{(L,Q)})
=
\left\{
\gamma\in\Hom(\bC^n/L,L)
\ :\
\gamma(Q/L)=0
\right\} = \Ann\bigl(T_L\Gr(k,Q)\bigr),
\]
and
\[
p_2(E^*_{(L,Q)})
=
\left\{
\eta\in\Hom(\bC^n/Q,Q)
\ :\
\operatorname{im}\eta\subseteq L
\right\} = \Ann\bigl(T_Q\Gr(L,\ell)\bigr),\]
where the annihilator is with respect to the trace pairing.
\end{lemma}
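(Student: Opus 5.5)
The plan is to compute both images directly from the explicit formulas \eqref{eq:conormal projections} for the projections of a covector $u\in E^*_{(L,Q)}=\Hom(\bC^n/Q,L)$, and then to identify each image with an annihilator using the description of tangent spaces to the two subGrassmannians.

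For the first equality, $p_1(u)$ is the composite $\bC^n/L\twoheadrightarrow\bC^n/Q\xrightarrow{u}L$.
\begin{itemize}
\item Containment: every such composite kills $Q/L$, which is the kernel of the surjection $\bC^n/L\to\bC^n/Q$.
\item Reverse containment: any $\gamma\in\Hom(\bC^n/L,L)$ with $\gamma(Q/L)=0$ factors uniquely through $(\bC^n/L)/(Q/L)\cong\bC^n/Q$, by the universal property of the quotient. This produces $u$ with $p_1(u)=\gamma$.
\end{itemize}

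For the second equality, $p_2(u)$ is the composite $\bC^n/Q\xrightarrow{u}L\hookrightarrow Q$.
\begin{itemize}
\item Containment: the image of such a composite lies in $L$.
\item Reverse containment: any $\eta\in\Hom(\bC^n/Q,Q)$ with $\operatorname{im}\eta\subseteq L$ corestricts to a map $u:\bC^n/Q\to L$ with $p_2(u)=\eta$.
\end{itemize}
Both maps $u\mapsto p_i(u)$ are injective, so each image is a linear subspace of dimension $k(n-\ell)$.

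For the annihilators, use the standard identifications of tangent spaces to the subGrassmannians:
\[
T_L\Gr(k,Q)=\Hom(L,Q/L)\subset\Hom(L,\bC^n/L),
\]
\[
T_Q\Gr(L,\ell)=\Hom(Q/L,\bC^n/Q)\subset\Hom(Q,\bC^n/Q),
\]
where the second space consists of maps $\beta:Q\to\bC^n/Q$ with $\beta|_L=0$.

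For the first annihilator, take $\alpha\in\Hom(L,Q/L)$ and $\gamma$ with $\gamma(Q/L)=0$. Then $\gamma\circ\alpha=0$, so the trace pairing vanishes. The annihilator of $T_L\Gr(k,Q)$ has dimension
\[
k(n-k)-k(\ell-k)=k(n-\ell),
\]
which equals the dimension of the image. So inclusion gives equality.

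For the second annihilator, take $\beta$ vanishing on $L$ and $\eta$ with image in $L$. Then $\beta\circ\eta=0$, so $\mathrm{tr}(\beta\circ\eta)=0$. The dimension count is
\[
\ell(n-\ell)-(\ell-k)(n-\ell)=k(n-\ell),
\]
which again matches, so equality follows.

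I expect no serious obstacle. The only point needing care is justifying the tangent space descriptions of the subGrassmannians inside $T\Gr\cong\sHom(\cS,\cQ)$. Alternatively, one can bypass them by computing annihilators of the subspaces directly: choose a splitting $\bC^n=L\oplus M\oplus N$ with $Q=L\oplus M$, and read everything off block matrices.
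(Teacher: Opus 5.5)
Your proposal is correct and follows essentially the same route as the paper: read off the images of $p_1$ and $p_2$ from the composition formulas (precomposition with $\bC^n/L\twoheadrightarrow\bC^n/Q$, postcomposition with $L\hookrightarrow Q$), then match them with annihilators using $T_L\Gr(k,Q)=\Hom(L,Q/L)$ and $T_Q\Gr(L,\ell)=\{\beta:\beta|_L=0\}$. You go further than the paper only in making the reverse inclusions explicit, via the factorization through the quotient and the dimension count $k(n-\ell)$ for the perfect trace pairing; the paper simply asserts these as equivalences.
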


\begin{proof}
Recall that the fiber of $E^*$ above a point $L \subset Q$ is the space $\Hom(\bC^n/Q, L).$
The map $p_1$ is given by pre-composing each homomorphism $u: \bC^n / Q \to L$ with the quotient map $\bC^n/L\twoheadrightarrow\bC^n/Q.$
Thus its image consists exactly of maps
$\gamma:\bC^n/L\to L$ that vanish on $Q/L$.
Since
\[
T_L\Gr(k,Q)=\Hom(L,Q/L),
\]
this is equivalent to $\gamma$ annihilating $T_L\Gr(k,Q)$ under the trace
pairing.

Similarly, the map $p_2$ is given by post-composing each homomorphism with the inclusion
$L\hookrightarrow Q$. Its image consists of maps
$\eta:\bC^n/Q\to Q$
whose image is contained in $L$. Since
\[
T_Q\Gr(L,\ell)
=
\left\{
\beta\in\Hom(Q,\bC^n/Q)
\ :\
\beta|_L=0
\right\},
\]
this is equivalent to $\eta$ annihilating $T_Q\Gr(L,\ell)$ under the trace pairing.
\end{proof}

The following two lemmas tell us how the conormal varieties of the principal Chow--Lam locus and Chow--Lam discriminantal locus compare to the conormal variety of $\cV.$ They will be used in the proof of Theorem \ref{thm:rankk}.

\begin{lemma}\label{lem:pcl containment}
Let $\cV\subset\Gr(k,n)$ be an irreducible variety of dimension
$k(n-\ell)-1+i$, where $k< \ell < n$ and $0\leq i\leq k(\ell - k).$ Suppose that $\mathcal{PCL}^i_{\cV}$ is a hypersurface. Then
\begin{equation}\label{eq:pcl containment}
\Con(\mathcal{PCL}^i_{\cV})
\subset
p_2\left(
p_1^{-1}(\Con(\cV) )
\right).
\end{equation}
\end{lemma}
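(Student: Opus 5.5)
We will show that a general covector of $\Con(\mathcal{PCL}^i_\cV)$ lies in $p_2\bigl(p_1^{-1}(\Con(\cV))\bigr)$. The proof uses the standard Lagrangian correspondence argument for projections, together with the explicit description of $N^*_FX$ from Lemma \ref{lem:conormal} and Lemma \ref{lem:eimage}. Since the right-hand side will turn out to be closed (or at least to contain a dense subset of the left side whose closure we can control), it suffices to check the containment over a dense open subset of $N^*_{\mathcal{PCL}^i_\cV}\Gr(\ell,n)$.

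\textbf{Step 1: reduction to a general point.} Let $\cH:=\mathcal{PCL}^i_\cV$, a hypersurface, and let $\cH'$ be an irreducible component. By generic smoothness applied to the projection $\Phi^i_\cV\to\cH'$, there is a dense open set of $Q\in\cH'_{\rm reg}$ with the following properties. There is a point $(L,Q)\in\Phi^i_\cV$ with $L\in\cV_{\rm reg}$ and $\Gr(k,Q)\cap\cV$ non-transverse at $L$. Moreover, the differential of $\Phi^i_\cV\to\cH'$ at a smooth point of $\Phi^i_\cV$ over $Q$ surjects onto $T_Q\cH'$. The conormal line at $Q$ is spanned by some $\eta\in\Hom(\bC^n/Q,Q)$ annihilating $T_Q\cH'$.

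\textbf{Step 2: the covector has image in $L$.} Consider the curves $Q(t)$ in $\Gr(L,\ell)$ through $Q$. For each such $Q(t)$, the point $(L,Q(t))$ stays in $F$ with $L\in\cV_{\rm reg}$. Non-transversality is equivalent to $T_L\cV+T_L\Gr(k,Q)\neq T_L\Gr(k,n)$. Here we would like to argue that this condition propagates along the fibre, but it does not in general: it depends on $Q$. So instead we use the fibre of $\Phi^i_\cV\to\Gr(k,n)$ more cleverly. The better route is dual. Non-transversality at $(L,Q)$ means there is a nonzero $\gamma\in N^*_\cV\Gr(k,n)_L$ with $\gamma(Q/L)=0$. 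By Lemma \ref{lem:eimage}, such a $\gamma$ equals $p_1(u)$ for some $u\in\Hom(\bC^n/Q,L)=E^*_{(L,Q)}$. Therefore
\[\Phi^i_\cV=\overline{\mathrm{pr}\bigl(p_1^{-1}(N^*_\cV\Gr(k,n))\setminus 0\bigr)},\]
that is, $\Phi^i_\cV$ is the closure of the base-projection of the fibre product $W:=p_1^{-1}(\Con(\cV))\subset N^*_FX$.

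\textbf{Step 3: the Lagrangian argument.} The set $W$ is the composition of the conic Lagrangian $\Con(\cV)\subset T^*\Gr(k,n)$ with the conic Lagrangian correspondence $N^*_FX$. By the standard clean-intersection and isotropy argument, $p_2(W)$ is conic and isotropic in $T^*\Gr(\ell,n)$. This holds because the symplectic form on $N^*_FX$ pulls back from $T^*X$, where the form restricted to $N^*_FX$ is zero, giving $p_1^*\omega_1 = p_2^*\omega_2$ on $N^*_FX$ up to sign; and $\omega_1$ vanishes on $\Con(\cV)$. Every irreducible component of $\overline{p_2(W)}$ therefore has dimension at most $\dim\Gr(\ell,n)$. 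Its projection to $\Gr(\ell,n)$ is contained in $\cH$, and by Step 2 it dominates $\cH'$. A conic isotropic variety over the smooth part of $\cH'$ with nonzero fibre directions must lie in $N^*_{\cH'}\Gr(\ell,n)$. The fibres there are lines, so the nonzero part of $p_2(W)$ over general $Q\in\cH'$ is exactly the conormal line, and $\Con(\cH')\subset\overline{p_2(W)}$.

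\textbf{Step 4: closedness.} Finally, $p_2$ is proper on the projective directions: $F\to\Gr(\ell,n)$ is proper and the maps are linear on fibres. Hence $p_2(p_1^{-1}(\Con(\cV)))$ is closed, which removes the closure and gives \eqref{eq:pcl containment}.

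The main obstacle is Step 3, specifically ensuring that the image of $W$ over $\cH'$ contains nonzero covectors rather than only zero ones. This requires that for general $(L,Q)$ the element $u$ lifting $\gamma$ has $p_2(u)\ne0$. That holds because $p_2$ is injective on $E^*$, since $L\hookrightarrow Q$ is injective. The isotropy then forces these covectors onto the conormal line.
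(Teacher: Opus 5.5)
Your argument is correct in substance, but the key step goes a different way from the paper's. The paper works on the incidence variety. It takes a general point $(L,Q)$ of a component $\Phi'$ of $\Phi^i_\cV$ dominating a component $\cH'$, where the differential $T_{(L,Q)}\Phi'\to T_Q\cH'$ is surjective. It lifts the non-transversality covector $\phi$ to $u\in E^*_{(L,Q)}$ by Lemma~\ref{lem:eimage}, as in your Step~2. It then checks directly that $\eta=p_2(u)$ kills $T_Q\cH'$: lift $\beta\in T_Q\cH'$ to $(\alpha,\beta)$ with $\alpha\in T_L\cV$, and use $0=\langle\phi,\alpha\rangle+\langle\eta,\beta\rangle$ with $\langle\phi,\alpha\rangle=0$.

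You record the same surjectivity in Step~1 but never use it. Instead you use a symplectic fact: composing $\Con(\cV)$ with the Lagrangian correspondence $N^*_FX$ gives a conic isotropic set, and a conic isotropic variety lies in the conormal variety of its base (since $\lambda=\pm\iota_e\omega$ for the Euler field $e$). The paper's route is more elementary, with no generic smoothness for $p_2$ and no transfer of isotropy. Yours presents the lemma as an instance of Lagrangian correspondences carrying conormal varieties to conic isotropic sets, in the spirit of Lemma~\ref{lem:tautological} and Proposition~\ref{prop:reduction conormal}.

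Your Step~4 adds something the paper leaves implicit. The map $p_2$ is the fibrewise injective bundle map $E^*|_F\hookrightarrow F\times_{\Gr(\ell,n)}T^*\Gr(\ell,n)$, which is a closed embedding, followed by a base change of the proper map $F\to\Gr(\ell,n)$. Hence $p_2$ is proper and the right-hand side is closed. The paper's proof only treats general points of $\Con(\cH')$. Note that properness uses the injectivity you mention at the end, not just linearity on fibres.

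One imprecision in Step~3 needs repair. It is false that every component of $\overline{p_2(W)}$ lies over $\mathcal{PCL}^i_\cV$ or dominates $\cH'$. The set $W=p_1^{-1}(\Con(\cV))$ contains the zero covector over every flag $L\subset Q$ with $L\in\cV$; for $i\geq 1$ such $Q$ typically fill $\Gr(\ell,n)$. It also contains limiting covectors over $\cV_{\rm sing}$, whose base points need not lie in $\mathcal{PCL}^i_\cV$.

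Run the isotropy argument instead on a component $W_0$ of $p_1^{-1}\bigl(N^*_\cV\Gr(k,n)\setminus 0\bigr)$ whose base image is dense in a component of $\Phi^i_\cV$ dominating $\cH'$. Then $\overline{p_2(W_0)}$ is irreducible, conic and isotropic, with base exactly $\cH'$ and nonzero covectors over a dense subset of $\cH'$, so it equals $\Con(\cH')$. This restriction also means you only need isotropy of the smooth Lagrangian $N^*_\cV\Gr(k,n)$, not of $\Con(\cV)$ over its singular locus.
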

\begin{proof}
Let $\cH$ be an irreducible component of $\mathcal{PCL}^i_{\cV}$, and
let $\Phi_\cH \subset F$ be an irreducible component of the incidence variety
$\Phi^i_{\cV}$ which dominates $\cH$. Choose
a general point $(L,Q)\in\Phi_\cH$ such that
$L\in \cV_{\rm reg}, \,
Q\in \cH_{\rm reg}, \,
(L,Q)\in (\Phi_\cH)_{\rm reg},$
and the differential of the projection
$q:\Phi_\cH\longrightarrow \cH$ is surjective at $(L,Q)$. Indeed, each of these conditions is open, so we may make such a choice.

Since $\cV$ and $\Gr(k,Q)$ do not meet transversely at $L$, there is
a nonzero covector
\[
\phi\in
\Ann(T_L\cV)\cap\Ann(T_L\Gr(k,Q)).
\]
By Lemma~\ref{lem:eimage}, there is a point $u \in E^*_{(L,Q)}$ whose projection $p_1(u)$ equals $\phi.$ Set $\eta := p_2(u).$

We claim that $\eta$ is the unique conormal vector to $\cH$ at $Q$. Then $(Q, \eta)$ will lie in $\Con(\cH).$ Since every point of $\Con(\cH)$ arises in this manner, and lies in $p_2(p_1^{-1}(\Con(\cV)))$ by construction of $\eta$, the theorem statement follows.

To prove the claim, fix
$\beta\in T_Q\cH$. Then surjectivity of $dq$ gives a lift
\[
(\alpha,\beta)\in T_{(L,Q)}\Phi_\cH,
\qquad \alpha\in T_L\cV.
\]
Since $(\phi,\eta)$ is conormal to $F$ at $(L, Q)$, we have
\[
0  = \gen{(\phi, \eta), (\alpha, \beta)}  = \langle\phi,\alpha\rangle+\langle\eta,\beta\rangle.
\]
The term $\gen{\phi,\alpha}$ vanishes because $\phi$ is conormal to $\cV$. Thus
$\langle\eta,\beta\rangle=0$. Then $\eta$ annihilates $T_Q\cH$, so $(Q,\eta)$ lies in $\Con(\cH)$.
\end{proof}

The following example shows that the containment in Lemma \ref{lem:pcl containment} can be strict.

\begin{example}\label{eg:strict containment}
Let $\cV = V(q_{12}, q_{34}, q_{56})$ be the intersection of three Schubert divisors in $\Gr(2,6)$, as in Example \ref{eg:schubert reducible}. The conormal space at a general point $L$ is spanned by the covectors to the three divisors. Thus at a general point of $\cV$, the conormal to $\cV$ contains three covectors of rank one, but a general covector has rank two.

The principal Chow--Lam locus $\mathcal{PCL}^2_\cV$ is $V(p_{1234}p_{1256}p_{3456}) \subset \Gr(4,6).$ Its conormal variety is the union of three irreducible varieties corresponding to the three factors. Each Pl\"ucker monomial defines a hypersurface of conormal rank one. Thus $\Con(\mathcal{PCL}^2_\cV)$ is entirely contained inside the rank one locus $D_1 \subset T^*\Gr(4, 6).$ However, the right side of \eqref{eq:pcl containment} includes some rank two covectors, coming from the projection of $p_1^{-1}(N^*_\cV\Gr(k,n) \cap D_2^\circ),$ which is nonempty. Thus the containment in \eqref{eq:pcl containment} is strict.
\end{example}

It is instructive to understand the full rank stratification of the variety $\cV$ from the previous example, to see how the irreducible components of the principal Chow--Lam locus arise. 

 \begin{example}[Rank stratification of the previous example]\label{eg:conic rank stratification}
	Let $\cV$ be as in Example \ref{eg:schubert reducible}.
Recall the rank stratification $D_0 \subset D_1 \subset D_2.$ Set \[C_i := N^*_\cV \Gr(k,n) \cap D_i^\circ.\]
By \eqref{eq:cl fiber}, the fiber above a point in $C_0$ is isomorphic to $\Gr(2, 4).$ Similarly, the fiber above a point in $C_1$ is isomorphic to $\Gr(2,3),$ and the fiber above a point in $C_2$ is isomorphic to $\Gr(2,2).$ The fiber $p_1^{-1}(C_1)$ has at least three components, corresponding to the three covectors of rank one at a general point. Thus we obtain the dimension counts
\[\dim p_1^{-1}(C_0) = 5 + 0 + 4, \quad \dim p_1^{-1}(C_1) \leq 5 + 1 + 2, \quad  \dim p_1^{-1}(C_2) = 5 + 3 + 0.\]

However, there are three additional components of $p_1^{-1}(C_1)$ which come from lower-rank strata in $\cV.$ We examine one of these, namely:
\[\cW := \{L \in \cV \ : \ L \cap \overline{e_5e_6} \neq 0\}.\]
The variety $\cW$ has dimension four; each line is the span of a point in $\overline{e_1e_2e_3e_4}$ and a point in $\overline{e_5e_6},$ giving $3 + 1$ degrees of freedom.

Let $\phi_{12}, \phi_{34}, \phi_{56}$ be the covectors of the three Schubert varieties. Fix a general point $L \in \cW$ and let $p$ be the intersection point of $L$ with $\overline{e_5e_6}.$ The images of $\phi_{12}$ and $\phi_{34}$ are then precisely $\overline{e_3e_4e_5e_6} \cap L = \overline{e_1e_2e_5e_6} \cap L = p,$ so any linear combination of $\phi_{12}$ and $\phi_{34}$ has rank one. Therefore the conormal fiber $N^*_\cV\Gr(k,n)_L$ at a general point in $\cW$ has a two-dimensional subspace of rank one homomorphisms, contributing a component of $p_1^{-1}(C_1)$ of dimension $4 + 2 + 2 = 8.$
\end{example}

\begin{lemma}\label{lem:cl equality}
Let $\cV \subset \Gr(k,n)$ be an irreducible variety of dimension $k(n-\ell)-1 + i,$ where $k < \ell \leq n-k$ and $0 \leq i \leq k(\ell - k).$ Suppose that $\mathcal{CL}^i_\cV$ is a hypersurface. Then
\begin{equation}\label{eq:clequality}
\Con(\mathcal{CL}^i_{\cV})
=
\overline{
p_2\left(
p_1^{-1}(\Con(\cV)^\circ_k)
\right)
}.
\end{equation}
\end{lemma}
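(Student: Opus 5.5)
Write $\cH := \mathcal{CL}^i_\cV$ and $R := \overline{p_2(p_1^{-1}(\Con(\cV)^\circ_k))}$. The plan is to prove the inclusion $\Con(\cH) \subseteq R$ by repeating the argument of Lemma~\ref{lem:pcl containment}, and then obtain equality from a dimension and irreducibility count.

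\emph{The inclusion $\Con(\cH)\subseteq R$.} By definition $\cH$ is the image of $\Psi^i_\cV$ under $\Psi^i_\cV \to N^*_FX \to F \to \Gr(\ell,n)$. Take a general point $(L,\phi)\times(Q,u)$ of the dense open fiber-bundle part of $\Psi^i_\cV$ from the proof of Theorem~\ref{thm:cldisc irreducible}. Then $(L,\phi)\in\Con(\cV)^\circ_k$, $L\in\cV_{\rm reg}$ and $Q\in\cH_{\rm reg}$. Let $\Phi'\subset F$ be the image of $\Psi^i_\cV$, which is irreducible and dominates $\cH$. We may also assume $(L,Q)$ is a smooth point of $\Phi'$ at which $\Phi'\to\cH$ is submersive. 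Set $\eta:=p_2(u)$.

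For $\beta\in T_Q\cH$, lift it to $(\alpha,\beta)\in T_{(L,Q)}\Phi'\subset T_{(L,Q)}F$ with $\alpha\in T_L\cV$. Conormality of $u$ to $F$ gives $\gen{\phi,\alpha}+\gen{\eta,\beta}=0$, and $\gen{\phi,\alpha}=0$, so $\eta\in\Ann(T_Q\cH)$.

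We need $\eta\neq0$. Since $\phi=p_1(u)$ has rank $k$, $u$ is surjective onto $L$, so $\eta=p_2(u)$ has rank $k$ and is nonzero. As $\cH$ is a hypersurface, $\eta$ spans the conormal line at $Q$. Scaling $u$ by $\bC^*$ keeps us inside $p_1^{-1}(\Con(\cV)^\circ_k)$. Hence a dense subset of $\Con(\cH)$ lies in $p_2(p_1^{-1}(\Con(\cV)^\circ_k))$, and taking closures gives $\Con(\cH)\subseteq R$.

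\emph{The reverse inclusion.} $R$ is irreducible, being the closure of the image of the irreducible set $p_1^{-1}(\Con(\cV)^\circ_k)$, which is dense in $\Psi^i_\cV$. The map $(L,\phi,Q,u)\mapsto(Q,p_2(u))$ factors through $\Psi^i_\cV$. Its image lies over $\cH$, and over each $Q$ it lands in covectors annihilating $T_Q\cH$ whenever $Q\in\cH_{\rm reg}$. The $\bC^*$-scaling shows that, generically, the image is exactly the line $N^*_{\cH}\Gr(\ell,n)_Q$. So the image lies, up to closure, in $N^*_\cH\Gr(\ell,n)$ over the dense open set where the general-point argument applies. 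Since $R$ is irreducible, it is the closure of its part lying over that open set, and therefore $R\subseteq\Con(\cH)$. This gives equality.

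\emph{Main obstacle.} The delicate point is justifying that a general point of $p_1^{-1}(\Con(\cV)^\circ_k)$ maps to a point $Q$ that is regular on $\cH$, with $(L,Q)$ a smooth point of $\Phi'$ and $\Phi'\to\cH$ submersive there. I would handle this by generic smoothness of the dominant morphism $\Phi'\to\cH$, which is valid in characteristic zero, together with the fact that each such condition is open and dense in the irreducible $\Psi^i_\cV$. An alternative is the Lagrangian characterization: show that $R$ is conic, isotropic, irreducible and of dimension $\dim\Gr(\ell,n)$, so it is the conormal variety of its projection $\cH$ by \cite[Lemma 4.2]{behrend}. The dimension computation there (the fiber $\Gr(\ell-k,n-2k)$ plus fiber-dimension bookkeeping) looks harder than the direct argument, so I would use it only as a fallback.
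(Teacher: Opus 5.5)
Your proof is correct. It differs from the paper's only in how the inclusion $R\subseteq\Con(\cH)$ is obtained.

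\textbf{First inclusion.} Your inclusion $\Con(\cH)\subseteq R$, obtained by rerunning the general-point argument of Lemma~\ref{lem:pcl containment} on the rank-$k$ locus, is exactly what the paper does. Your explicit check that $\eta=p_2(u)$ has rank $k$, hence is nonzero and spans the conormal line, is a welcome addition.

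\textbf{Reverse inclusion: the paper's route.} The paper never proves $R\subseteq\Con(\cH)$ directly. It notes that $R$ is irreducible of dimension at most $\dim\Gr(\ell,n)$. It also notes that $\Con(\cH)$ is irreducible of dimension exactly $\dim\Gr(\ell,n)$. The inclusion already proved then forces equality.

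\textbf{Reverse inclusion: your route.} You observe that the same computation sends a dense open subset $W\subseteq p_1^{-1}(\Con(\cV)^\circ_k)$ into $N^*_{\cH}\Gr(\ell,n)\subseteq\Con(\cH)$. Since $\Con(\cH)$ is closed, continuity gives $R=\overline{p_2(W)}\subseteq\Con(\cH)$.

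\textbf{What each route buys.} Your route needs no dimension bookkeeping. The paper's bound $\dim R\le\dim\Gr(\ell,n)$ is asserted without proof. It really rests on a count such as
\[
\dim\Psi^i_\cV=k(n-k)+(\ell-k)(n-\ell-k)=\ell(n-\ell),
\]
which comes from $\dim\Con(\cV)=k(n-k)$ and the $\Gr(\ell-k,n-2k)$ fibers. The paper's version is shorter once that count is granted.

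\textbf{One tightening.} In your second paragraph you assert that over every $Q\in\cH_{\rm reg}$ the image lands in the conormal line. This is true, but only a posteriori: it follows from closedness of $\Con(\cH)$ together with Lemma~\ref{lem:con vs bundle}. It is not a direct output of the general-point argument. The continuity argument with $W$ is all you need, and it makes the discussion of ``the part of $R$ lying over an open set'' unnecessary. Likewise, the Lagrangian fallback in your last paragraph is not needed.
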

\begin{proof}
The same argument as in Lemma \ref{lem:pcl containment}, restricted to the rank $k$ locus, shows the left side of \eqref{eq:clequality} is contained in the right.

In the proof of Theorem \ref{thm:cldisc irreducible}, we showed that the incidence variety $\Psi_\cV^i$ is irreducible. Thus $p_2(\Psi_\cV^i)$ is irreducible of dimension at most $\dim\Gr(\ell,n)$. By Theorem \ref{thm:cldisc irreducible} $\mathcal{CL}^i_{\cV}$ is irreducible, so $\Con(\mathcal{CL}^i_{\cV})$ is irreducible, and has dimension $\dim \Gr(\ell, n)$. This forces equality in \eqref{eq:clequality}.
\end{proof}

We are now ready to prove the first main theorem from the introduction.

\begin{reptheorem}{thm:rankk}
Let $\cV\subset\Gr(k,n)$ be an irreducible variety of dimension
$k(n-\ell)-1+i$, where $k< \ell < n$ and $0\leq i\leq k(\ell - k).$ If $\mathcal{PCL}^i_{\cV}$ is a
hypersurface, then $\crk \mathcal{PCL}^i_{\cV} \leq \min(k, n-\ell).$
\end{reptheorem}

\begin{proof}
Let $(Q,\eta)$ be a general point of
$\Con(\mathcal{PCL}^i_{\cV})$. By
Lemma~\ref{lem:pcl containment}, there exist a flag $L\subset Q$ and a covector $u \in E^*_{(L,Q)}$
such that $\eta=p_2(u)$ in $\Hom(\bC^n/Q, Q).$ By Lemma~\ref{lem:eimage}, the image of the map $\eta$ lies in $L.$ But $L$ has dimension $k,$ so $\eta$ has rank at most $k$. Since $Q$ is in $\Gr(\ell, n),$ the rank is bounded above by $\ell, n - \ell$ as well, so the upper bound is $\min(k, n - \ell).$
\end{proof}


For the Chow--Lam discriminant, the upper bound in Theorem \ref{thm:rankk} is attained.

\begin{theorem}\label{thm:cl rank}
Let $\cV \subset \Gr(k,n)$ be an irreducible variety of dimension $k(n-\ell)-1 + i,$ where $k < \ell \leq n-k$ and $0 \leq i \leq k(\ell - k).$ If $\mathcal{CL}^i_{\cV}$ is a hypersurface, then $\crk \mathcal{CL}^i_{\cV} = k.$
\end{theorem}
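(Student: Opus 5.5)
The upper bound $\crk \mathcal{CL}^i_{\cV} \leq k$ holds for the same reason as in Theorem~\ref{thm:rankk}. By Lemma~\ref{lem:cl equality}, a dense subset of $\Con(\mathcal{CL}^i_\cV)$ consists of covectors $\eta = p_2(u)$ with $u \in E^*_{(L,Q)}$. By Lemma~\ref{lem:eimage}, each such $\eta$ has image contained in $L$, so its rank is at most $k$. Since $D_k$ is closed, every covector of $\Con(\mathcal{CL}^i_\cV)$ then has rank at most $k$. The content of the theorem is therefore the lower bound: we must exhibit covectors of rank exactly $k$. By Lemma~\ref{lem:crk open}, it suffices to show that a general covector has rank $k$, and for this one point of rank $k$ in a dense subset of $\Con(\mathcal{CL}^i_\cV)$ is enough.

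The plan is to compare ranks of $p_1(u)$ and $p_2(u)$ for a single $u \in E^*_{(L,Q)} = \Hom(\bC^n/Q, L)$. By \eqref{eq:conormal projections}, $p_1(u)$ is the composition of the surjection $\bC^n/L \twoheadrightarrow \bC^n/Q$ with $u$, so $\im p_1(u) = \im u$ and $\rk p_1(u) = \rk u$. Likewise $p_2(u)$ is $u$ followed by the injection $L \hookrightarrow Q$, so $\rk p_2(u) = \rk u$. Hence $\rk p_1(u) = \rk p_2(u)$ for every $u$; in words, both projections preserve rank.

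Now take $(L,\phi) \in \Con(\cV)^\circ_k$, which is nonempty because $\mathcal{CL}^i_\cV$ is assumed to be a hypersurface and hence $\Psi^i_\cV \neq \varnothing$. Take any $(Q,u) \in p_1^{-1}(L,\phi)$; by \eqref{eq:cl fiber} the fiber is nonempty, since $\ell \le n-k$. Then $\eta := p_2(u)$ satisfies $\rk \eta = \rk \phi = k$. So $p_2(p_1^{-1}(\Con(\cV)^\circ_k))$ lies entirely in $D^\circ_k$. This set is dense in $\Con(\mathcal{CL}^i_\cV)$ by Lemma~\ref{lem:cl equality}, so the general covector has rank $k$, which gives $\crk = k$.

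The main point requiring care is that Lemma~\ref{lem:cl equality} gives equality only after taking closure. We need the rank-$k$ covectors to be dense in the conormal variety, and not merely contained in it. Density follows because the closure of the image equals the irreducible set $\Con(\mathcal{CL}^i_\cV)$, and $D_{k-1}$ is closed. If the image were contained in $D_{k-1}$, its closure would be too, contradicting the rank computation above. Everything else is bookkeeping with the explicit formulas \eqref{eq:conormal projections}.
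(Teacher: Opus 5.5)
Your proof is correct and follows the paper's argument. By Lemma~\ref{lem:cl equality}, a general covector of $\Con(\mathcal{CL}^i_\cV)$ is $\eta = p_2(u)$ with $p_1(u) \in \Con(\cV)^\circ_k$, and both maps in \eqref{eq:conormal projections} preserve the rank of $u$, so $\eta$ has rank $k$; your explicit upper bound and the openness-of-$D^\circ_k$ density argument only fill in steps the paper leaves implicit, such as why a general point of the closure lies in the image itself.
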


\begin{proof}
Let $(Q, \eta)$ be a general point of $\Con(\mathcal{CL}^i_{\cV})$. By Lemma~\ref{lem:cl equality}, there exist a flag $L \subset Q$ and a covector $u \in  E^*_{(L,Q)}$ such that $\eta = p_2(u)$ and $p_1(u) \in \Con(\cV)^\circ_k.$ The covector $p_1(u)$ has rank exactly $k.$ Thus $\eta$ also has rank $k.$
\end{proof}

\section{Recovering a variety from a low-rank hypersurface}\label{sec:recovery}

The original motivation of Chow and van der Waerden for defining a Chow form of a projective variety was to capture this variety by a single polynomial. That is, given the Chow form ${\rm C}_\cV$, one can uniquely recover the underlying variety $\cV$ from it.

Gelfand, Kapranov, and Zelevinsky gave a strategy for recovering the variety $\cV$ from any of its associated hypersurfaces using symplectic geometry \cite{gkz}. In particular, their method shows that \emph{every} coisotropic hypersurface is an associated hypersurface of a projective variety. The goal of this section is to explain how to recover a subvariety of a Grassmannian from any of its Chow--Lam discriminants which are hypersurfaces.

The process is as follows. Given a covector $\eta:\bC^n/Q\to Q,$
let $L:=\im(\eta)$. Fix as before the notation $p: \bC^n / L \twoheadrightarrow \bC^n / Q.$ Then $L$ has dimension $k$ and $\eta$ induces a map
\[
\eta \circ p:
\bC^n/L\twoheadrightarrow\bC^n/Q
\to L.
\]
Then we obtain a map between cotangent bundles:
\begin{equation}\label{eq:conormal reduction}
\begin{split}
\rho: D^\circ_k & \to T^*\Gr(k,n) \\
(Q, \eta) & \mapsto (\im \eta,\eta \circ p).
\end{split}
\end{equation}
\begin{definition}
	Let $\cY\subset\Gr(\ell,n)$ be an irreducible variety of conormal rank $k$. We define the \emph{conormal reduction} of $\cY$ to be
\[
\Lambda_\cY
:=
\overline{\rho\bigl(\Con(\cY)^\circ_k\bigr)}
\subset T^*\Gr(k,n).
\]
\end{definition}

Let $\pi: T^*\Gr(k,n) \to \Gr(k,n)$ be the bundle map to the Grassmannian. Projecting the conormal reduction $\Lambda_\cY$ yields a subvariety $\pi(\Lambda_\cY)$ of $\Gr(k,n),$ which has rank $k$ by construction. We will prove later in Theorem \ref{thm:cl recovery converse} that we can recover a variety from its Chow--Lam discriminant via conormal reduction.

The key fact we will need is that the conormal reduction is in fact itself a conormal variety. To prove this, we first need a little bit of symplectic geometry. Let $X$ be a smooth complex manifold and let $\pi: T^*X \to X$ be the projection map from the cotangent bundle. Then $X$ possesses a tautological one-form on its cotangent bundle $T^*X,$ defined at every point $(x, \xi) \in T^*X$ by
\begin{equation*}
\begin{split}
\lambda_{(x, \xi)} : T_{(x, \xi)}(T^*X) & \to \bC \\
v & \mapsto \xi(d\pi_{(x, \xi)}(v)).
\end{split}
\end{equation*}
In local coordinates $(x_1, \ldots, x_n, \xi_1, \ldots, \xi_n)$ on $T^*X,$ the form $\lambda_X$ may be expressed as
\[\lambda_X := \sum_i \xi_i d x_i.\]
Let $\omega_X$ denote the symplectic form on $T^*X,$ which is a two-form. Then we have the relation
\[\omega_X = \sum_{i = 1}^n   d x_i \wedge d \xi_i = -d \lambda_X.\]

The following result may be known in the literature, but we could not find it in the form we wanted, so we provide a self-contained proof for the convenience of the reader.

\begin{lemma}\label{lem:tautological}
Let $\lambda_{\ell}, \lambda_k$ be the tautological one-forms on $T^*\Gr(\ell, n)$ and $T^*\Gr(k,n),$ respectively. Let $\rho$ be defined as in \eqref{eq:conormal reduction}. Then $\rho^* \lambda_k = \lambda_{\ell}|_{D^\circ_k}.$
\end{lemma}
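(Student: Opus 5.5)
The plan is to compare the two one-forms pointwise by evaluating them on an arbitrary tangent vector, using the trace-pairing descriptions of the cotangent fibers together with the tangent space computation for the flag variety from Lemma~\ref{lem:conormal}. First I will check that $\rho$ is a morphism on $D^\circ_k$. On this locus $\eta$ has constant rank $k$, so $L = \im \eta$ varies algebraically and gives a map $D^\circ_k \to \Gr(k,n)$. The homomorphism $\eta\circ p$ is then a regular section of the cotangent bundle pulled back along this map. The key structural remark is that $\im\eta \subseteq Q$, so the composite
\[
D^\circ_k \xrightarrow{\ \rho\ } T^*\Gr(k,n)\times \Gr(\ell,n) \longrightarrow \Gr(k,n)\times\Gr(\ell,n),\qquad (Q,\eta)\mapsto (\im\eta,\, Q),
\]
lands in the flag variety $F = \Fl(k,\ell,n)$.

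Next fix $(Q,\eta)\in D^\circ_k$ and a tangent vector $v \in T_{(Q,\eta)}D^\circ_k$. Set $\beta := d\pi_\ell(v) \in \Hom(Q,\bC^n/Q)$ and $\alpha := d\pi_k(d\rho(v)) \in \Hom(L,\bC^n/L)$, where $L=\im\eta$. By definition of the tautological forms and the trace identification,
\[
\lambda_\ell(v) = \mathrm{tr}(\eta\circ\beta), \qquad (\rho^*\lambda_k)(v) = \mathrm{tr}\bigl((\eta\circ p)\circ\alpha\bigr).
\]
So the lemma reduces to the identity $\mathrm{tr}(\eta\circ p\circ\alpha) = \mathrm{tr}(\eta\circ\beta)$. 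The endomorphism $\eta\circ\beta$ of $Q$ has image inside $L$. For any endomorphism with image in an invariant subspace $L$, the trace equals the trace of its restriction to $L$. Hence $\mathrm{tr}(\eta\circ\beta) = \mathrm{tr}(\eta\circ\beta|_L)$, where $\beta|_L \in \Hom(L,\bC^n/Q)$. On the other side, $p\circ\alpha$ also lies in $\Hom(L,\bC^n/Q)$. Thus it suffices to show
\[
\mathrm{tr}\bigl(\eta\circ(p\circ\alpha - \beta|_L)\bigr) = 0 .
\]

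This last step is where the flag variety enters. Because the curve $(\im\eta(t),Q(t))$ stays inside $F$, the pair $(\alpha,\beta)$ lies in $T_{(L,Q)}F$. By the short exact sequence in the proof of Lemma~\ref{lem:conormal}, this means $ds(\alpha,\beta) = p\circ\alpha - \beta|_L = 0$. So the difference vanishes identically, not merely after pairing with $\eta$, and the two forms agree at every point of $D^\circ_k$.

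The step I expect to need the most care is the bookkeeping of identifications. Specifically, one must check that $d\pi_k\circ d\rho$ is really the tangent vector to the image curve $t\mapsto \im\eta(t)$, and that the sign and ordering conventions of the trace pairing match those in the definition of $\lambda$. I would handle this by working with an explicit smooth curve $(Q(t),\eta(t))$ through the point, with velocity $v$, and differentiating. The invariant-subspace trace trick and the vanishing of $ds$ on $T F$ then finish the argument.
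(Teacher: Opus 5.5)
Your proposal is correct and follows essentially the same route as the paper, which also uses the auxiliary map $f=(\pi_k\circ\rho,\pi_\ell)\colon D^\circ_k\to\Fl(k,\ell,n)$, the tangent-space relation $p\circ\dot{L}(v)=\dot{Q}(v)|_L$ on $F$, and the fact that $\eta\circ\dot{Q}(v)$ has image in $L$ to equate the two traces. You additionally check that $\rho$ is a morphism and justify the trace equality via the invariant subspace $L$, two points the paper leaves implicit.
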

\begin{proof}
Let $\lambda_k$ be the tautological one-form on the Grassmannian $\Gr(k, n)$ and let \[\pi_k : T^*\Gr(k, n) \to \Gr(k,n)\] be the bundle map. Since the duality between tangent and cotangent spaces is given by the trace map, we have
\[(\lambda_k)_{(L, \phi)}(v) = {\rm tr}(\phi \circ (d\pi_k)_{(L, \phi)}(v)),\]
where $v$ is in $T_{(L, \phi)}(T^*\Gr(k,n))$ and $(d\pi_k)_{(L, \phi)}(v)$ is in $T_L\Gr(k,n).$ Suppose $(Q, \eta)$ is in $D^\circ_k$ and define $(L, \eta \circ p)$ to be the image of the map $\rho$ as in \eqref{eq:conormal reduction}.
Then we have
\begin{equation}\label{eq:rho lambdak} 
	\begin{split}
	(\rho^*\lambda_k)_{(Q, \eta)}(v) & = (\lambda_k)_{(L, \eta \circ p)}(d \rho_{(Q, \eta)}(v))\\
& = {\rm tr}(\eta \circ p \circ d(\pi_k \circ \rho)_{(Q, \eta)}(v))
	\end{split}
\end{equation}
where the first equality is by definition of pullback of one-forms and the second is by functoriality of the differential. Fix the notation
\[
\dot{L} := d(\pi_k \circ \rho)_{(Q,\eta)} \colon T_{(Q,\eta)}D^\circ_k \to T_L\Gr(k,n),
\qquad
\dot{Q} := (d\pi_\ell)_{(Q,\eta)} \colon T_{(Q,\eta)}D^\circ_k \to T_Q\Gr(\ell,n).
\]
Define the auxiliary map
\[f  = (\pi_k \circ \rho, \pi_\ell): D^\circ_k \to \Fl(k, \ell, n),\]
and observe its differential is
\[df = (d(\pi_k \circ \rho), d \pi_\ell): T_{(Q, \eta)}D^\circ_k \to T_{(L, Q)}\Fl(k, \ell, n).\]
Then $df_{(Q, \eta)}(v)$ lies in $T_{(L, Q)} F$ and its projections to $T_L\Gr(k,n)$ and $T_Q\Gr(\ell, n)$ are $\dot{L}$ and $\dot{Q},$ respectively. But by the tangent-space description of $F$ we have
\[p \circ \dot{L}(v) = \dot{Q}(v)|_L \ \in \Hom(L, \bC^n/Q).\]
The image of $\eta \circ \dot{Q}(v)$ lies in $L$. Thus the traces satisfy
\begin{equation}\label{eq:trace equality}
{\rm tr}((\eta \circ p) \circ \dot{L}(v)) = {\rm tr} (\eta \circ (p \circ \dot{L}(v))) = {\rm tr} (\eta \circ \dot{Q}(v)).
\end{equation}
The left side of \eqref{eq:trace equality} is \eqref{eq:rho lambdak} and the right side is precisely $(\lambda_\ell)_{(Q, \eta)}(v).$
\end{proof}

We are now able to prove that the conormal reduction is itself a conormal variety.
\begin{proposition}\label{prop:reduction conormal}
Let $\cY\subset\Gr(\ell,n)$ be an irreducible rank $k$ variety. Its conormal reduction $\Lambda_\cY$ is an irreducible conic Lagrangian
subvariety of $T^*\Gr(k,n).$ In particular, $\Lambda_\cY=\Con(\pi(\Lambda_\cY)),$ where $\pi$ is the bundle projection to the Grassmannian $\Gr(k,n)$.
\end{proposition}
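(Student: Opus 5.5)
The plan is to verify the three defining properties of a conic Lagrangian (irreducible, conic, isotropic of dimension $\dim\Gr(k,n)$) and then invoke the characterization cited after Definition~\ref{def:conormal variety} (\cite[Lemma 4.2]{behrend}): every irreducible conic Lagrangian subvariety of $T^*\Gr(k,n)$ is the conormal variety of its projection. That immediately gives $\Lambda_\cY=\Con(\pi(\Lambda_\cY))$.

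\emph{Irreducibility and conicity.} Since $\cY$ has conormal rank $k$, the set $\Con(\cY)^\circ_k$ is a nonempty Zariski open subset of the irreducible variety $\Con(\cY)$, hence irreducible. Its image under the morphism $\rho$ is irreducible, and so is the closure $\Lambda_\cY$. For conicity, note that $\im(t\eta)=\im\eta$ for $t\in\bC^*$, so $\rho(Q,t\eta)=(\im\eta,\,t\,\eta\circ p)$. Thus $\rho$ is $\bC^*$-equivariant. The source is conic, so the image and its closure are conic.

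\emph{Isotropy.} Write $\omega_k=-d\lambda_k$. On the smooth dense part of $\rho(\Con(\cY)^\circ_k)$, a tangent vector can be lifted through $d\rho$ at a general point of the source, by generic smoothness of $\rho$ onto its image. Lemma~\ref{lem:tautological} gives $\rho^*\lambda_k=\lambda_\ell|_{D^\circ_k}$, hence $\rho^*\omega_k=\omega_\ell|_{D^\circ_k}$. The form $\omega_\ell$ vanishes on $\Con(\cY)$ because conormal varieties are Lagrangian. Therefore $\omega_k$ vanishes on pairs of tangent vectors to the image at general points. Isotropy is a closed condition, so it holds on the smooth locus of $\Lambda_\cY$. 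Consequently $\dim\Lambda_\cY\le\dim\Gr(k,n)$.

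\emph{Dimension.} This is the step I expect to be the main obstacle: I must show $\dim\Lambda_\cY=k(n-k)$. Isotropy alone only gives the upper bound, so the plan is a tautological-form argument instead. Pulling back $\lambda_k$ to $\Lambda_\cY$ and using conicity, $\lambda_k$ evaluated on the Euler vector field equals the covector paired with $d\pi$ of that field. That is zero, which makes $\Lambda_\cY$ lie in the conormal variety of $Z:=\pi(\Lambda_\cY)$.

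Concretely, at a general point $(L,\phi)$ of $\Lambda_\cY$ I take a vector $v$ tangent to $\Lambda_\cY$. Using $\lambda_k|_{\Lambda_\cY}=0$, which holds because $\lambda_\ell|_{\Con(\cY)}=0$ and $\rho^*\lambda_k=\lambda_\ell$, I get $\phi(d\pi(v))=0$. Since $d\pi$ maps onto $T_LZ$ at general points, $\phi$ annihilates $T_LZ$. This gives $\Lambda_\cY\subset\Con(Z)$.

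Both sides are irreducible and $\Lambda_\cY$ surjects onto $Z$. Equality then reduces to showing that the fiber of $\Lambda_\cY$ over a general $L\in Z$ has dimension $\codim Z$, i.e.\ it is the whole annihilator. If direct fiber counting is hard, I would compare dimensions by computing the fibers of $\rho$. The fiber over $(L,\psi)$ consists of those $(Q,\eta)$ with $Q\supset L$ and $Q/L\subset\ker\psi$, with $\eta$ determined by $\psi$. This gives the count $\dim\Lambda_\cY=\dim\Con(\cY)-\dim(\text{fiber})$. I would then check, using that $\Con(\cY)$ is Lagrangian and contains these full $\rho$-fibers, that the count equals $k(n-k)$. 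Showing that $\Con(\cY)$ is a union of full $\rho$-fibers is the delicate point, and I would deduce it by viewing the $\rho$-fibers as the isotropic leaves of the coisotropic submanifold $D^\circ_k$.
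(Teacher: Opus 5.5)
Your irreducibility, conicity and isotropy arguments match the paper's. The gap is the dimension step, which you yourself flag and then leave unfinished. Your plan needs the general fiber of $\rho|_{\Con(\cY)^\circ_k}$ to be the \emph{entire} $\rho$-fiber, i.e.\ that $\Con(\cY)^\circ_k$ is saturated by $\rho$-fibers. You only gesture at this through an unverified claim that these fibers are the characteristic leaves of $D^\circ_k$.

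The missing observation is that only an inequality is needed. Through any point of $\Con(\cY)^\circ_k$, the fiber of the restricted map is a subset of the full fiber
\[
\{(Q',\eta') : L\subset Q',\ Q'/L\subset\ker\psi\}\;\cong\;\Gr(\ell-k,n-2k),
\]
which has dimension $(\ell-k)(n-\ell-k)$. The fiber dimension theorem then gives
\[
\dim\Lambda_\cY\;\ge\;\ell(n-\ell)-(\ell-k)(n-\ell-k)\;=\;k(n-k).
\]
Your isotropy argument gives $\dim\Lambda_\cY\le k(n-k)$, so equality holds. This is exactly how the paper closes the proof. Saturation then comes for free afterwards, since a general fiber is full-dimensional inside an irreducible Grassmannian; it is not a prerequisite.

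Your coisotropic route is true and could be completed, but the verification is absent from the proposal. Lemma~\ref{lem:tautological} gives $\rho^*\omega_k=\omega_\ell|_{D^\circ_k}$. Since $\rho$ is a smooth fibration onto the open rank-$k$ locus of $T^*\Gr(k,n)$, the kernel of $\omega_\ell|_{D^\circ_k}$ is $\ker d\rho$. This kernel has dimension $\ell(n-\ell)-k(n-k)=\codim D^\circ_k$, so $D^\circ_k$ is coisotropic with characteristic distribution $\ker d\rho$. Then at smooth points $\ker d\rho\subset T\Con(\cY)$. This is considerably more work than the one-line inequality.

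Two smaller remarks:
\begin{itemize}
\item The Euler-field sentence is a non-sequitur. $\lambda_k$ vanishes on vertical vectors for any subvariety, so it proves nothing about $\Lambda_\cY$. Your subsequent concrete argument, lifting tangent vectors via generic smoothness and using $\rho^*\lambda_k=\lambda_\ell$, is what actually shows $\Lambda_\cY\subset\Con(Z)$.
\item That inclusion is a legitimate alternative to citing \cite{behrend}. Once $\dim\Lambda_\cY=k(n-k)$ is known, $\Lambda_\cY\subset\Con(Z)$ with both sides irreducible of the same dimension gives $\Lambda_\cY=\Con(Z)$ directly.
\end{itemize}
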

\begin{proof}
Since $\crk \cY = k,$ we have that $\Con(\cY)^\circ_k$ is a nonempty open subset of $\Con(\cY),$ hence irreducible. Thus the closure $\Lambda_\cY$ of its image under $\rho$ is also irreducible.

The variety $\Lambda_\cY$ is conic with respect to the $\bC^*$-action on the cotangent bundle. Indeed, $(\im \eta, t \cdot (\eta \circ p))$ is the image of $(Q, t \cdot \eta),$ so it lies in $\Lambda_\cY.$

Now, let us compute the dimension of $\Lambda_\cY.$ The fiber of $\rho$ in $D_k^\circ$ consists of
\[\rho^{-1}(L, \phi) = \{(Q, \eta): Q / L \subset \ker \phi\},\]
and $\eta$ is determined by $\phi.$ This has dimension $ (\ell - k)(n - \ell - k).$ Now, fix the quotient $\pi_Q: \bC^n \twoheadrightarrow \bC^n / Q.$ At a point $\rho(Q, \eta)$, the fiber consists of
\[\rho^{-1}(\rho(Q, \eta)) \cap N^*_\cY \Gr(\ell,n) \subset \{(Q', \eta'): \im \eta \subset Q' \subset \pi_Q^{-1}(\ker \eta)\},\]
where $\eta'$ is determined by $Q'.$ Thus the fiber in $N^*_\cY \Gr(\ell,n) $ has dimension $ \leq (\ell - k)(n - \ell - k),$ so the image $\Lambda_\cY$ has dimension $\geq k(n-k).$

To show that $\Lambda_\cY$ is Lagrangian, it remains to show that the symplectic form $\omega_{\Gr(k,n)}$ vanishes on it. This will also yield the upper bound $\dim \Lambda_\cY \leq k(n-k).$ But by Lemma \ref{lem:tautological}, we know that $\rho^*\lambda_k = \lambda_\ell.$ Since $\Con(\cY)$ is a conormal variety, $\lambda_\ell$ vanishes on it. Thus $\omega_{\Gr(k,n)} = - d\lambda_k$ is zero on the image $\rho(\Con(\cY)^\circ_k).$  So $\Lambda_\cY$ is Lagrangian, as desired.
\end{proof}

We now prove the second main theorem from the introduction, which states that every sufficiently low-rank hypersurface is a Chow-Lam discriminant. Note that the index $i$ is determined by the conormal reduction.

\begin{reptheorem}{thm:cl recovery}
Let $\cH \subset \Gr(\ell,n)$ be a hypersurface of conormal rank $k$, where $k < \ell.$ Let $\cV := \pi(\Lambda_\cH)$ be the projection of its conormal reduction to $\Gr(k,n)$. If $\dim \cV = k(n - \ell) - 1+i$ for some $0 \leq i \leq k(\ell - k),$ then $\cH$ equals the Chow--Lam discriminant $\mathcal{CL}^i_\cV$ of $\cV.$
\end{reptheorem}
\begin{proof}
Since every variety is the projection of its conormal variety, it suffices to show that \begin{equation}\label{eq:conormals equal}
\Con(\cH) = \overline{p_2(p_1^{-1}(\Con(\cV)^\circ_k))}.
\end{equation} Suppose that $(Q, \eta)$ is a general point of $\Con(\cH),$ so that $\eta$ has rank $k.$ Then $\rho(Q, \eta) = (L, \phi)$ is in $\Con(\cV)$ by Proposition \ref{prop:reduction conormal}. Furthermore, $\eta$ viewed as a map to its image gives $u: \bC^n / Q \to L$, with $p_1(u) = \phi$ and $p_2(u) = \eta.$ Thus the left side is contained in the right, with equality since they are irreducible of the same dimension.
\end{proof}

\begin{theorem}\label{thm:cl recovery converse}
Let $\cV \subset \Gr(k,n)$ be a variety of conormal rank $k$ and dimension $k(n - \ell) - 1+i$ for some $0 \leq i \leq k(\ell - k)$ and $k < \ell.$ Suppose that $\mathcal{CL}^i_\cV$ is a hypersurface. Then $\pi(\Lambda_{\mathcal{CL}^i_\cV})$ equals $\cV.$
\end{theorem}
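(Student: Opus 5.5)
The plan is to show the equality of conormal varieties
\[
\Lambda_{\mathcal{CL}^i_\cV} \;=\; \Con(\cV),
\]
and then project with $\pi$: $\pi(\Con(\cV)) = \cV$ because $\cV$ is irreducible and $N^*_\cV\Gr(k,n)$ surjects onto $\cV_{\rm reg}$. Since $\crk \cV = k$, the set $\Con(\cV)^\circ_k$ is a nonempty Zariski open subset of $\Con(\cV)$, so the incidence variety $\Psi^i_\cV$ is nonempty. Set $\cH := \mathcal{CL}^i_\cV$, a hypersurface by hypothesis.

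\textbf{Step 1: $\cH$ has conormal rank $k$.} Theorem~\ref{thm:cl rank} gives $\crk \cH = k$. This assumes $\ell \le n-k$; otherwise $\Psi^i_\cV$ is empty and $\cH$ is not a hypersurface, so we may take $\ell \le n-k$. Hence $\Lambda_\cH$ is defined. By Proposition~\ref{prop:reduction conormal}, $\Lambda_\cH$ is an irreducible conic Lagrangian subvariety of $T^*\Gr(k,n)$, of dimension $k(n-k)$.

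\textbf{Step 2: $\Lambda_\cH \subseteq \Con(\cV)$.} By Lemma~\ref{lem:cl equality}, a general point $(Q,\eta)$ of $\Con(\cH)$ has the form $\eta = p_2(u)$ for some $u \in E^*_{(L,Q)}$ with $L \subset Q$ and $\phi := p_1(u) \in \Con(\cV)^\circ_k$. By \eqref{eq:conormal projections}, $\eta$ is $u$ followed by the inclusion $L \hookrightarrow Q$, and $\phi = u \circ p$.

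\begin{itemize}
\item Since $\phi$ has rank $k$ and factors through $u$, the map $u\colon \bC^n/Q \to L$ is surjective. Therefore $\im \eta = L$.
\item Consequently $\eta \circ p$, viewed as a map to its image, is $u \circ p = \phi$.
\end{itemize}

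So $\rho(Q,\eta) = (L,\phi)$ lies in $\Con(\cV)^\circ_k$. This holds on a dense open subset of $\Con(\cH)^\circ_k$, and $\rho$ is continuous, so $\rho(\Con(\cH)^\circ_k) \subseteq \Con(\cV)$. Taking closures gives the inclusion.

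\textbf{Step 3: equality.} Both $\Lambda_\cH$ and $\Con(\cV)$ are irreducible of dimension $k(n-k) = \dim \Gr(k,n)$, the first by Proposition~\ref{prop:reduction conormal} and the second because conormal varieties are Lagrangian. A closed irreducible subvariety of an irreducible variety of the same dimension equals it, so $\Lambda_\cH = \Con(\cV)$. Projecting gives $\pi(\Lambda_\cH) = \cV$.

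The main point needing care is the identification $\rho \circ p_2 = p_1$ on the rank-$k$ locus, that is, the surjectivity of $u$ forcing $\im \eta = L$. A secondary point is that Lemma~\ref{lem:cl equality} describes only a dense subset of $\Con(\cH)$, whereas $\rho$ is applied to all of $\Con(\cH)^\circ_k$. This is handled by continuity of $\rho$ on $D^\circ_k$ together with closedness of $\Con(\cV)$, and the inclusion in Step 2 already suffices because Step 3 closes the argument by dimension.
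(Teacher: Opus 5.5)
Your proposal is correct and is essentially the paper's argument. Both use Lemma~\ref{lem:cl equality} and the observation that $\rk(u\circ p)=k$ forces $u$ onto $L$, so $\im\eta=L$ and $\rho(Q,\eta)=(L,\phi)$; the only difference is the last step, where the paper notes that $p_1$ maps onto $\Con(\cV)^\circ_k$ (its fibers are copies of $\Gr(\ell-k,n-2k)$), so $\rho\bigl(p_2(p_1^{-1}(\Con(\cV)^\circ_k))\bigr)=\Con(\cV)^\circ_k$ and equality of closures is immediate, whereas you prove only $\Lambda_\cH\subseteq\Con(\cV)$ and close with the Lagrangian dimension count from Proposition~\ref{prop:reduction conormal}, which is valid but relies on heavier input.
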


\begin{proof}
We use Lemma \ref{lem:cl equality} again. Set $\cH=\mathcal{CL}^i_\cV$.
For
\[
(L,\phi)\times(Q,\eta)
\in
p_1^{-1}(\Con(\cV)^\circ_k),
\]
write $u:\bC^n/Q\to L$ for the corresponding element of $N_F^*X$.
By \eqref{eq:conormal projections}, the projections are
\[
\phi:
\bC^n/L\twoheadrightarrow\bC^n/Q\xrightarrow{u}L,
\qquad
\eta:
\bC^n/Q\xrightarrow{u}L\hookrightarrow Q.
\]
Since $\rk\phi=k=\dim L$, we have $\rk\eta=k$ and
$\im \eta=L$. Hence, by the definition of $\rho$,
\[
\rho(Q,\eta)=(L,\phi).
\]
Thus
\[
\rho\left(
p_2\left(
p_1^{-1}(\Con(\cV)^\circ_k)
\right)\right)
=
\Con(\cV)^\circ_k.
\]
The left-hand side is dense in $\Lambda_\cH$, while the right-hand
side is dense in $\Con(\cV)$ because $\cV$ has conormal rank $k$.
Taking closures gives $\Lambda_\cH=\Con(\cV).$
Finally, the projection of $\Con(\cV)$ to $\Gr(k,n)$ is $\cV$.
\end{proof}

\begin{remark}
The recovery process given by Theorem \ref{thm:cl recovery converse} is different from the \emph{recovered variety} described in \cite{PrattRanestad}. That will, given the ordinary Chow--Lam form of a variety $\cV \subset \Gr(k,n)$, produce a variety containing $\cV$ but which may have extra components. For instance, when the variety $\cV$ is the intersection of three Schubert hyperplanes in $\Gr(2,5)$, the recovered variety in $\Gr(2,5)$ will contain $\cV$ but also six other irreducible components, three of dimension $2$ and three of dimension $3$ \cite[Example 6.1]{PrattRanestad}. The conormal reduction, on the other hand, uniquely recovers the original variety $\cV.$
\end{remark}

From this section and the previous section, one may reach the conclusion that the Chow-Lam discriminant is the ``correct'' object to study; it is irreducible and completely encodes the variety (Theorem \ref{thm:cl recovery converse}). However, there are advantages to both constructions. For example, the principal Chow-Lam locus is easier to calculate, because the incidence variety used is a subvariety of $\Fl(k, \ell, n)$ rather than of the conormal bundle $N^*_F\Gr(k,n)$. Thus encoding it in a computer algebra system such as \texttt{Macaulay2} requires fewer variables and runs in less time. Even if one is interested in the Chow-Lam discriminant, it is often easier to compute the Chow-Lam locus and then extract the discriminant as an irreducible factor. 

Furthermore, the bounds on $k, \ell,$ and $n$ are more generous in the case of the principal Chow-Lam form, allowing more varieties as possible input. Indeed, in the next section we see examples of varieties that have (ordinary, irreducible) Chow-Lam forms but no Chow-Lam discriminants (Example \ref{eg:positroid38 ranks}). This example also shows that Chow-Lam forms of positroid varieties may not be Chow-Lam discriminants. Thus to describe polynomials appearing in nature, it is convenient to have both constructions. 

\section{Positroid Varieties}\label{sec:positroids}
In this section we study positroid varieties and their ranks. The \emph{matroid} $M_L$ of a point $L \in \Gr(k,n)$ is the set of indices $I \in \binom{[n]}{k}$ such that the Pl\"ucker coordinate $p_I(L)$ is nonzero. The set of indices $I \in \binom{[n]}{k}$ not in the matroid are called the \emph{nonbases} of the matroid. A matroid $M$ is called a \emph{positroid} if $M = M_L$ for a point $L$ whose Pl\"ucker coordinates are nonnegative real numbers.

The \emph{realization space} of a matroid $M$ is the set of points $\{L \in \Gr(k,n) \ : \ M = M_L\}.$ The Zariski closure of the realization space is called the \emph{matroid variety} of $M,$ or \emph{positroid variety} if $M$ is a positroid. We denote it by $\Pi_M.$ The positroid variety $\Pi_M$ is cut out, even scheme-theoretically, by the Pl\"ucker coordinates indexed by nonbases \cite[Theorem 5.15]{KnutsonLamSpeyer}; this fails for general matroid varieties. 

Chow--Lam forms were originally studied for positroid varieties under the name \emph{universal amplituhedron variety} \cite[Section 18.1]{Lam}. They arose in the context of scattering amplitudes for certain quantum field theories. In many cases, they exhibit interesting positivity properties. They have been conjectured to be elements of Lusztig's dual canonical basis \cite[Conjecture 19.8]{Lam}, and have also been shown to be cluster variables in some cases; see \cite[Theorem 8.17]{EvenZohar}. Theorem \ref{thm:rankk} can be applied to decide when a polynomial in Pl\"ucker coordinates of $\Gr(\ell, n)$, such as a particular cluster variable, is a principal Chow--Lam form of some subvariety of $\Gr(k,n)$ for $k<\ell$.

\begin{example}
Fix $k = 2, \, n = 9$ and let $M$ be the matroid whose nonbases are $\{12,13,23,45,67,89\}.$ Let $\Pi_M$ denote the corresponding positroid variety. The Chow--Lam form was computed in \cite[Section 19.4]{Lam}:
\[CL_{\Pi_M} = p_{1235}p_{1237}p_{4689}
- p_{1234}p_{1237}p_{5689}- p_{1235}p_{1236}p_{4789}
+ p_{1234}p_{1236}p_{5789}.
\]
Theorem \ref{thm:rankk} predicts that the conormal rank is at most two.
Indeed, one can verify by explicit computation that it is exactly two. This can be done symbolically but in this case it is easier to sample a point on the vanishing locus of $CL_{\Pi_M}.$ Indeed, it suffices to sample a two-dimensional subspace of $\bC^9$ from the positroid variety, and take the direct sum with a general two-dimensional subspace.

Contrast this with the following degree three cluster variable for $k=3$ and $n=8$:
\[A := p_{134}\left(p_{258}p_{167}
- p_{678}p_{125}\right)
- p_{158}p_{234}p_{167}.
\]
This is the cluster variable $A$ from \cite[Theorem 8]{scott}. Again, one can compute explicitly that the conormal rank of the hypersurface defined by this polynomial is three, by taking the conormal vector at a general point. Thus it is not a Chow--Lam form or principal Chow--Lam form of any variety in $\Gr(2,8)$ or $\Gr(1,8) = \bP^7.$
\end{example}

We describe the conormal space to a positroid variety at a general point, and use this to calculate the conormal rank. In Section \ref{subsec:conormal rank} we carried out the conormal rank calculation for Schubert varieties and torus orbit closures. Our strategy for each of these was to parameterize a dense open set. While there are such parameterizations available for open subsets of positroid varieties by work of Postnikov \cite{postnikov}, we find it more convenient to work with the defining ideal directly.

Given a linear space $L,$ a \emph{dependent set} of $L$ is a nonempty set $C = \{i_1, \ldots, i_r\}\subset [n]$ such that the projection of $L$ to $\text{span}(e_{i_1}, \ldots, e_{i_r})$ has dimension less than $r.$ A \emph{circuit} is a minimal dependent set. If the linear space $L$ is given as the rowspan of a matrix $X$, then the \emph{circuit vector} corresponding to $C$ is the unique (up to scalar) vector in the kernel of $X$ with support $C.$ In other words, it yields the unique linear dependency between the columns indexed by $C.$

We define a new matroid, whose ground set is a subset of the nonbases of $M$, namely
\begin{equation*}
E_M := \{I \in \binom{[n]}{k} : \rk  I = k - 1\}.
\end{equation*}
Each subset $I$ of rank $k-1$ contains a unique circuit $C_I.$ We define the matroid $M_{\bf circ}$ to be the linear matroid of the corresponding circuit vector $v_I$ in $\bC^n.$ Note that $v_I$ naturally lives in the perpendicular space $L^\perp \subset \bC^n.$

Now, since $I$ has rank $k-1$, the span of the columns $\{X_i : i \in I\}$ is a hyperplane in $\bC^k.$ It is cut out by a unique (up to scalar) linear functional $\lambda_I,$ which we may view as a vector of length $k.$ The \emph{cocircuit vector} $w_I$ corresponding to $I$ is the vector $\lambda_I X$ in $\bC^n$ resulting from matrix multiplication. Note that $w_I$ naturally lives in the vector space $L.$ We define the matroid $M_{\bf cocirc}$ to be the linear matroid on the cocircuit vectors. We then have the following result.

\begin{proposition}\label{prop:positroid rank}
Let $M$ be a positroid and let $\Pi_M$ be the corresponding positroid variety. The conormal space to $\Pi_M$ at a point $L$ with $M = M_L$ has rank equal to
\[\max \{|S| : S \subseteq E_M \text{ is independent in both } M_{\bf circ} \text{ and } M_{\bf cocirc}\}.\]
\end{proposition}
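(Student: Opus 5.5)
Since $\Pi_M$ is cut out scheme-theoretically by the nonbasis Pl\"ucker coordinates \cite[Theorem 5.15]{KnutsonLamSpeyer}, the plan is to compute the conormal space at $L$ with $M_L = M$ as the span of the differentials $d p_I$ for $I$ a nonbasis, and then to maximize rank over that span. Scheme-theoretic cutting out means the Zariski cotangent space of $\Pi_M$ at $L$ is $T^*_L\Gr(k,n)$ modulo the span of the $dp_I|_L$. Points with $M_L = M$ form a dense open subset of $\Pi_M$. By Lemma~\ref{lem:crk open} it then suffices to take $L$ general in the realization space and identify $N^*_L = \operatorname{span}\{dp_I|_L : I \notin M\}$. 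There is one subtlety: $\Pi_M$ is reduced, so generic smoothness holds and the Zariski tangent space agrees with the tangent space at general points.

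The first step is to compute $dp_I|_L$ as an element of $\Hom(\bC^n/L, L)$. Write $L$ as the rowspan of a $k\times n$ matrix $X$ and perturb it to $X + \epsilon A$. The derivative of $\det X_I$ is $\sum_{i\in I}\det(X_I \text{ with column } i \text{ replaced by } A_i)$. If $\rk I \le k-2$, every such term vanishes, so $dp_I = 0$. If $\rk I = k-1$, the adjugate of $X_I$ has rank one and equals $c_I \otimes \lambda_I$. Here $\lambda_I$ is the functional vanishing on the column span of $X_I$, and $c_I$ is the kernel vector of $X_I$, which is the circuit vector $v_I$ restricted to $I$. The derivative is therefore $A \mapsto \lambda_I A v_I = \operatorname{tr}(A\, v_I \lambda_I)$. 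Under the trace identification, with tangent vectors $A$ read modulo $L$ as maps $L\to \bC^n/L$, the covector $dp_I$ is the rank-one map $\bC^n/L \to L$ given by $x \mapsto \langle v_I, x\rangle\, w_I$, where $w_I = \lambda_I X \in L$. So $dp_I = w_I \otimes v_I$, with $v_I \in L^\perp = (\bC^n/L)^*$ and $w_I\in L$. I need to check this identification and its sign and transpose conventions carefully, but only the rank-one shape $w_I\otimes v_I$ matters.

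Consequently $N^*_L = \operatorname{span}\{ w_I \otimes v_I : I \in E_M\}$, and the conormal rank is the generic rank of $\sum_{I\in E_M} t_I\, w_I\otimes v_I$ over $t \in \bC^{E_M}$. In matrix form this is $W\,\mathrm{diag}(t)\,V^T$, where $W$ has columns $w_I$ and $V$ has columns $v_I$. By Cauchy--Binet, the $r\times r$ minors of this product are $\sum_{|S|=r} \det W_{R,S}\,\det V_{C,S}\prod_{I\in S}t_I$. The monomials $\prod_{I\in S} t_I$ are distinct for distinct $S$, so a minor is identically zero in $t$ only if every coefficient vanishes. Hence the generic rank is at least $r$ exactly when some $r$-subset $S$ is independent in both $M_{\bf cocirc}$ (the $w$'s) and $M_{\bf circ}$ (the $v$'s); independence of $S$ among the $w$'s gives some nonzero $\det W_{R,S}$, and likewise for $V$. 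This is the standard rank formula for a generic weighted sum of rank-one tensors, and it yields the claimed maximum.

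I expect the main obstacle to be the first paragraph: justifying that the spanning covectors at a point with $M_L=M$ are exactly the $dp_I$. This needs the scheme-theoretic statement, together with the fact that a point of the open realization stratum is smooth on $\Pi_M$ (or else the statement should be read at general such $L$). I would invoke the cited theorem of Knutson--Lam--Speyer and the homogeneity of the realization stratum under the torus to argue smoothness. Once this is in place, the remaining steps are linear algebra.
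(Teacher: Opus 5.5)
Your proposal is correct and follows essentially the paper's proof: Knutson--Lam--Speyer to identify the conormal space with $\operatorname{span}\{dp_I\}$, the observation that $dp_I=0$ when $\rk I\le k-2$ and is the rank-one tensor built from $v_I$ and $w_I$ when $I\in E_M$, and then Cauchy--Binet with distinct square-free monomials in the $t_I$. Your adjugate (Jacobi-formula) computation of $dp_I$ is a cleaner, coordinate-free version of the paper's affine-chart calculation. On the smoothness point you flag, which the paper leaves implicit, torus homogeneity will not work, since the realization space is generally not a single torus orbit. Instead, use that any $L$ with $M_L=M$ lies in the open positroid variety, which Knutson--Lam--Speyer show is smooth because it is isomorphic to an open Richardson variety.
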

\begin{proof}
Recall \(\Pi_M\) is cut out in the Grassmannian scheme-theoretically  by the Pl\"ucker coordinates corresponding to nonbases of \(M\) \cite[Theorem 5.15]{KnutsonLamSpeyer}. Thus the differentials of the coordinates $(dp_I)_L$, as $I$ ranges over nonbases of $M$, span the cotangent space at $L.$ For each nonbasis $I $, let $v_I \in L^\perp$ be the circuit vector and $w_I \in L$ be the cocircuit vector. We claim that under the identification $\Hom(\bC^n / L, L) = L^\perp \otimes L,$ the conormal space to $\Pi_M$ at $L$ is
\begin{equation}\label{eq:cotangent positroid}
N^*_{\Pi_M}\Gr(k,n)_L = \mathrm{span} \{v_I \otimes w_I : I \in E_{M}\}.
\end{equation}

We may prove \eqref{eq:cotangent positroid} by working in an affine chart for each nonbasis $I.$ Without loss of generality, suppose $I = \{1, \ldots, k\}.$ Let $r$ be the rank of $I,$ and choose a maximal independent set $J \subset I$ of the vectors. Let us parameterize the linear space $L$ by a $k \times n$ matrix $X.$ We work in an affine chart in which the columns of $X$ indexed by $J$ are the basis vectors $e_1, \ldots, e_r \in \bC^k$. Then we have $n-r$ remaining columns filled with variables $a_{11}, \ldots, a_{k,n-r}$. In particular, \[dp_I / da_{ij} = \det A\]
where $A$ is the square minor of $X$ with rows $[r, k] \setminus i$ and columns $[r, k] \setminus j.$ At $L,$ every entry of $A_{[r, k]}$ vanishes. In particular, if $I$ has rank less than $k-1$, then the vector $(dp_I)_L  $ is zero.

Suppose that $I$ has rank $k-1.$ Then $(dp_I)_L  = da_{k,k}.$ Inside $\bC^k,$ the hyperplane cutting out the span of the $I$ vectors is precisely $e_k^*.$ 
Thus under the identification $L^\perp \otimes L,$ we get
 \[v_I \otimes w_I = da_{k,k}.\]
In particular, the cotangent space is spanned by a collection of rank one matrices. The maximum rank of a sum of these matrices is the maximum size of a nonzero minor of that sum. Write $\sum_{I} c_I v_I \otimes w_I = V \cdot {\rm diag}(c) \cdot W^T.$ By the Cauchy--Binet theorem, an $s \times s$ minor of the sum indexed by rows $R,$ columns $T$ is given by
\[\det (V \cdot {\rm diag}(c) \cdot W^T)_{R, T} = \sum_{|S| = s} \det V_{R, S} \det W_{T,S} \prod_{I \in S}c_I.\]
The products in the $c_I$ are square-free and distinct. Thus there is no cancellation and the sum is nonzero whenever at least one term $\det V_{R, S} \det W_{T,S}$ is nonzero, or equivalently when the vectors $(v_{I})_{I \in S}$ and $(w_{I})_{I \in S}$ are independent.
\end{proof}

\begin{remark}\label{rmk:cocircuit}
	The matroids $M_{\bf circ}$ and $M_{\bf cocirc}$ depend not only on the matroid $M,$ but on the underlying linear space. For example, consider the matrices
	\[B=
\begin{pmatrix}
0&0&0&1&1&1&1&1&1\\
1&1&1&0&0&0&5&9&3\\
9&5&8&8&7&5&0&0&0
\end{pmatrix},
\qquad
B'=
\begin{pmatrix}
0&0&0&1&1&1&1&1&1\\
1&1&1&0&0&0&-1&-1&-1\\
7&5&9&1&3&5&2&8&5
\end{pmatrix}.
\]
Then the matrices have the same set $\{123,456,789\}$ of nonbases. However, the cocircuit vectors are
\[e_1B, \, e_2B, \, e_3B \in L, \quad e_1B', \, e_2B', \, (e_1 + e_2)B' \in L'.\]
Thus the cocircuit matroid of $L$ is $U_{3,3}$ whereas the cocircuit matroid of $L'$ is $U_{2,3}.$

However, the matroids $M_{\bf circ}$ and $M_{\bf cocirc}$ will be the same for any \emph{generic} member $L$ of the realization space $\Pi_M$, where generic here is a stronger condition than simply $M_L = M.$ Indeed, the realization space of a positroid is irreducible, and the linear dependency of the cocircuit vectors is a closed condition on elements of $\Pi_M$.
\end{remark}

The situation for positroids for $k=2$ is particularly simple to describe. The following can in principle be proven from Proposition \ref{prop:positroid rank} but it is easier to prove directly.
\begin{lemma}
Let $M$ be a loopless positroid of rank two. Then $\Pi_M$ has conormal rank two if and only if there exist two nonbases not contained in the same parallel class.
\end{lemma}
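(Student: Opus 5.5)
We will work directly with the description of the conormal space used in the proof of Proposition~\ref{prop:positroid rank}, specialized to $k=2$. By Lemma~\ref{lem:crk open}, it suffices to compute the maximum rank of a covector at a general point $L$ of $\Pi_M$, which we may take to lie in the realization space, so $M_L=M$. Write $L$ as the rowspan of a $2\times n$ matrix $X$ with columns $X_1,\dots,X_n\in\bC^2$. Since $M$ is loopless, every column is nonzero, so each column determines a point of $\bP^1$. The parallel classes of $M$ are the fibers of the map $[n]\to\bP^1$, $i\mapsto[X_i]$. The nonbases are exactly the pairs $\{i,j\}$ lying in a common parallel class.

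In particular every nonbasis has rank $1=k-1$, so $E_M$ is the full set of nonbases. By \eqref{eq:cotangent positroid}, the conormal space at $L$ is spanned by the rank one tensors $v_I\otimes w_I$, where $I=\{i,j\}$ runs over the nonbases. Concretely:
\begin{itemize}
\item the circuit vector $v_I\in L^\perp$ is supported on $\{i,j\}$, namely $v_I=e_i-c\,e_j$ where $X_i=cX_j$;
\item the cocircuit vector is $w_I=\lambda_I X$, where $\lambda_I$ is the (unique up to scalar) functional on $\bC^2$ vanishing on the line $[X_i]$.
\end{itemize}
The key observation is that $w_I$ depends only on the parallel class $P$ containing $I$. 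Write $w_P$ for this common vector.

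Suppose first that all nonbases lie in one parallel class $P$, or that there are no nonbases at all. Then every covector has the form $\bigl(\sum_I c_I v_I\bigr)\otimes w_P$, which has rank at most one. Hence $\crk\Pi_M\le 1$.

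Conversely, suppose $I\subset P$ and $I'\subset P'$ are nonbases in distinct parallel classes. Consider the covector $v_I\otimes w_P+v_{I'}\otimes w_{P'}$; we claim it has rank two. Two linear independences are needed.
\begin{itemize}
\item The vectors $v_I,v_{I'}$ are independent, because they are nonzero with disjoint supports (the classes $P$ and $P'$ are disjoint).
\item The vectors $w_P,w_{P'}$ are independent. The functionals $\lambda_P,\lambda_{P'}$ are non-proportional, since they vanish on distinct points of $\bP^1$. Moreover $X$ has rank two, so $\lambda\mapsto\lambda X$ is injective.
\end{itemize}
A sum of two rank one tensors $a\otimes b+a'\otimes b'$ with $a,a'$ independent and $b,b'$ independent has rank two. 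Equivalently, $\{I,I'\}$ is independent in both $M_{\bf circ}$ and $M_{\bf cocirc}$, so Proposition~\ref{prop:positroid rank} gives the same conclusion. Since $k=2$ caps the rank, this gives $\crk\Pi_M=2$.

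The only delicate point is justifying that \eqref{eq:cotangent positroid} applies at every point of the realization space, not just at a generic one. Here this is harmless: the independence arguments above use only $M_L=M$ and never the finer genericity discussed in Remark~\ref{rmk:cocircuit}.
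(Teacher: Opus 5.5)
Your proof is correct, but it takes the route the paper explicitly sets aside: the paper says the lemma ``can in principle be proven from Proposition~\ref{prop:positroid rank} but it is easier to prove directly.''

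The paper argues in explicit coordinates. For the forward direction it normalizes a representative so that columns $1,2$ lie on the line of $e_1$ and columns $3,4$ on the line of $e_2$. It then checks that $dp_{12}+dp_{34}$ has rank two in the chart $p_{13}\neq 0$. For the converse it writes a local parametrization of $\Pi_M$ in which all constrained entries lie in a single row. Hence every conormal vector is supported in that row and has rank at most one.

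You instead specialize \eqref{eq:cotangent positroid} to $k=2$ and isolate the structural reason behind the statement. The cocircuit vector $w_I$ depends, up to scalar, only on the parallel class of $I$, while circuit vectors from different classes have disjoint supports. This makes the link with the matroid-intersection formula transparent: $\{I,I'\}$ is independent in both $M_{\bf circ}$ and $M_{\bf cocirc}$ exactly when $I,I'$ lie in different classes. It also explains why the dependence on $L$ noted in Remark~\ref{rmk:cocircuit} disappears in rank two.

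The trade-off is in the upper bound. Your single-class case relies on the spanning statement in \eqref{eq:cotangent positroid}, and hence on the scheme-theoretic theorem of Knutson--Lam--Speyer. The paper's local parametrization needs only the elementary description of rank-two configurations near a point of the realization space. For the lower bound, both arguments need only that each $dp_I=v_I\otimes w_I$ is conormal, which is automatic.

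Your closing paragraph can be dropped. By Lemma~\ref{lem:crk open}, you only need a single general (hence regular) point of the realization space, not every point of it.
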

\begin{proof}
Suppose there are two such nonbases. Then the ground set of $M$ has size at least four. By relabelling, we may suppose that a point $L \in \Pi_M$ has a matrix representative of the form
\[\begin{bmatrix}
1 & a & 0 & 0 & \ldots \\
0 & 0 & 1 & b & \ldots
\end{bmatrix}\]
for some $a, b.$ Then a choice of a conormal vector in the affine chart $p_{13} \neq 0$ is $dp_{12} + dp_{34}.$
For the other direction, if all nonbases are contained in the same parallel class, then points are locally parameterized by
\[\begin{bmatrix}
1 & 0 & 0 & 0 & \ldots \\
0 & 1 & a & b & \ldots
\end{bmatrix}\]
and every conormal vector has rank at most one.
\end{proof}

As $k$ gets higher, there are more possibilities for the conormal rank.
\begin{example}\label{eg:positroid38 ranks}
We here record the ranks of codimension $4$ positroids for $k = 3, n=6,$ up to permutations of the ground set $[6].$ The entries $\star$ in the columns labeled nonbases may be filled with any numbers. The column labelled $f$ records the bounded affine permutation of the positroid variety; see \cite[Section 6]{Lam}. One can use this to compute the cohomology class of the positroid variety via the algorithm in \cite[Section 10]{Lam}. Here $\sigma_\lambda$ denotes the cohomology class of the Schubert variety $\Omega_\lambda$.

\begin{table}[ht]
\centering
\small
\renewcommand{\arraystretch}{1.3}
\setlength{\tabcolsep}{3pt}

\begin{tabularx}{\textwidth}{@{}
    >{\raggedright\arraybackslash}X
    >{\centering\arraybackslash}p{0.13\textwidth}
    >{\centering\arraybackslash}p{0.19\textwidth}
    >{\centering\arraybackslash}p{0.10\textwidth}
@{}}
\hline
Nonbases of $M$
& $f$
& $[\Pi_M]$
& $\operatorname{crk}(\Pi_M)$\\
\hline

$156,234,235,236,245,246,256,345,346,356,456$
& $[745869]$
& $\sigma_{31}$
& $2$
\\[2mm]

$\star 45, \star 46, \star 56$
& $[478569]$
& $\sigma_{22}$
& $2$
\\[2mm]

$\star \star 6, 345$
& $[475896]$
& $\sigma_{211}$
& $2$
\\[2mm]

$125,126,156,256,345,346,356,456$
& $[475968]$
& $\sigma_{31}+\sigma_{22}$
& $2$
\\[2mm]

$\star 43, \star 56$
& $[574869]$
& $\sigma_{22}+\sigma_{211}$
& $2$
\\[2mm]

$126,136,146,156,234,235,245,345$
& $[645897]$
& $\sigma_{31}+\sigma_{211}$
& $2$
\\[2mm]

$126,145,245,345,346,356,456$
& $[476598]$
& $\sigma_{31}+\sigma_{22}+\sigma_{211}$
& $3$
\\

\hline
\end{tabularx}

\caption{The codimension-$4$ positroid varieties in $\Gr(3,6)$,
up to symmetry.}
\label{tab:gr36-codim4}
\end{table}

Each of the positroid varieties listed in Table \ref{tab:gr36-codim4} has a  Chow--Lam form, which has degree equal to the coefficient of the class $\sigma_{31}$ \cite[Theorem 3.5]{ChowLam}. However, in this case there are no Chow--Lam discriminants, since $\ell = 4 > n - k = 3.$ Indeed, the Chow--Lam discriminant would have to have rank $3$ by Theorem \ref{thm:cl rank} and live in $\Gr(4,6),$ which is impossible.

The Python code for this computation was produced by ChatGPT 5.6. It enumerates the positroids up to symmetry, converts between Grassmann necklace and affine permutation indexing, and uses \texttt{stanley\_symmetric\_function()} from \texttt{Sage} to obtain the cohomology class. To compute the conormal rank, it produces a matrix representative from the Le-diagram parameterization and uses Proposition \ref{prop:positroid rank} to get a lower bound. The upper bound is achieved as follows. For every $S \subset E_M,$ set $U_S = \cup_{I \in S} C_I.$ Then
\begin{equation}\label{eq:cocircuit bound}
\operatorname{rk}\{v_I : I \in S\}
\le \dim L^\perp \cap \bC^{U_S} = 
|U_S|-\operatorname{rk}_M(U_S).
\end{equation}
Then \eqref{eq:cocircuit bound} gives an upper bound on the rank of the circuit matroid, which bounds the conormal rank by Proposition \ref{prop:positroid rank}. The author cross-checked by hand that the matrix representatives (listed in Appendix \ref{app:example75-computation}) give the correct nonbases and bounded affine permutations. 
\end{example}

In this article our main goal was to develop the theory of conormal rank, generalizing the study of coisotropic hypersurfaces of Gelfand--Kapranov--Zelevinsky. Thus there are many natural questions which we have not addressed. In particular, it would be interesting to see whether the statistic of conormal rank has some combinatorial meaning for other classes of polynomials in Pl\"ucker coordinates, such as web invariants or cluster variables.

\subsection*{LLM Use}
ChatGPT 5.6 Sol was used to generate mathematical content in Propositions \ref{prop:schubert rank}, \ref{prop:torus orbit rank}, \ref{prop:positroid rank}, Lemmas \ref{lem:con vs bundle}, \ref{lem:tautological}, Examples \ref{eg:conics}, \ref{eg:conic rank stratification}, \ref{eg:positroid38 ranks} and the example of Remark \ref{rmk:cocircuit}. It was also used to shorten and edit various proofs in Sections 4, 5, and 6. ChatGPT suggested the name ``conormal reduction''.
Claude Fable generated a referee report which pointed out many minor errors. The text itself was written solely by the author, except for typesetting formulas, figures, and tables, and formatting citations. The author takes full responsibility for the contents of the paper.

\subsection*{Acknowledgements}
We thank Kathl\'{e}n Kohn for suggesting this problem and for helpful discussions. The author received support from NSF GRFP no. 2023358166, a UC Davis Chancellor's Fellowship, and a Perimeter Institute Postdoctoral Fellowship while this work was being conducted.

\appendix
\section{Matrix representatives for Table \ref{tab:gr36-codim4}}
\label{app:example75-computation}

For the seven classes appearing in Example~7.5, in the order in which they
occur in Table~\ref{tab:gr36-codim4}, we use the following representatives in
$\Gr(3,6)$:
\[
A_1=
\begin{pmatrix}
1&0&0&0&0&0\\
0&1&0&-1&-2&-2\\
0&0&1&1&1&1
\end{pmatrix},
\qquad
A_2=
\begin{pmatrix}
1&0&0&1&1&1\\
0&1&0&-1&-1&-1\\
0&0&1&1&1&1
\end{pmatrix},
\]
\[
A_3=
\begin{pmatrix}
1&0&0&1&2&0\\
0&1&0&-1&-2&0\\
0&0&1&1&1&0
\end{pmatrix},
\qquad
A_4=
\begin{pmatrix}
1&0&0&1&1&1\\
0&1&0&-1&-1&-1\\
0&0&1&1&0&0
\end{pmatrix},
\]
\[
A_5=
\begin{pmatrix}
1&0&0&0&1&1\\
0&1&0&0&-1&-1\\
0&0&1&1&1&1
\end{pmatrix},
\qquad
A_6=
\begin{pmatrix}
1&0&0&0&0&1\\
0&1&0&-1&-2&0\\
0&0&1&1&1&0
\end{pmatrix},
\]
\[
A_7=
\begin{pmatrix}
1&0&0&1&1&1\\
0&1&0&-1&-1&-1\\
0&0&1&1&1&0
\end{pmatrix}.
\]

	\bigskip
	\bigskip

	\footnotesize
	\noindent {\bf Author's address:}

	\smallskip

	\noindent Elizabeth Pratt, Perimeter Institute for Theoretical Physics
	\hfill \url{lpratt@perimeterinstitute.ca}

\end{document}